\documentclass[12pt,reqno]{amsart}

\usepackage{fixltx2e} % Get the latest LaTeX enhancements
\usepackage[utf8]{inputenc} % Customize to your encoding
\usepackage{amssymb, latexsym, stmaryrd, amsthm, dsfont, amsfonts, amsbsy, amsmath, mathrsfs} % Imprescindible
\usepackage{mathtools} % for some math typesetting tricks
\usepackage{bm, bbm} % for good bold math symbols
\usepackage{enumerate} % enhanced "enumerate" environments
\usepackage{verbatim} % for enhanced "comment" environment
\usepackage{url}   
\usepackage{lscape} % for \url command in bibliography
\usepackage{centernot}
\usepackage[dvipsnames]{xcolor}
\usepackage[colorinlistoftodos]{todonotes} % to add inline or margin comments
\usepackage{nicefrac} % For comparison
\usetikzlibrary{calc}

\usepackage{microtype,tikz,tikz-cd}  
\makeatletter % A correction needed for microtype:
\def\MT@register@subst@font{\MT@exp@one@n\MT@in@clist\font@name\MT@font@list
 \ifMT@inlist@\else\xdef\MT@font@list{\MT@font@list\font@name,}\fi}
\makeatother

\makeatletter % Custom \item to reference custom labels
\newcommand{\myitem}[1]{%
\item[(#1)]\protected@edef\@currentlabel{#1}%
}
\makeatother

\usepackage[pdftex,bookmarks,bookmarksnumbered,linktocpage,   %%  customize to your liking
         colorlinks,linkcolor=blue,citecolor=blue]{hyperref}
         
\usepackage[capitalize,noabbrev]{cleveref} %for better references. Use \cref instead of \ref without the need of specifying Theorem, Lemma, etc.

\newcommand{\bit}{\begin{itemize}}    % but see also \benbullet below
\newcommand{\eit}{\end{itemize}}
\newcommand{\ben}{\begin{enumerate}}
\newcommand{\een}{\end{enumerate}}
\newcommand{\benroman}{\ben[\normalfont (i)]}  % *
\let\eroman\een
\newcommand{\bde}{\begin{description}}
\newcommand{\ede}{\end{description}}
\let\oper=\mathbb                               %  operators

\newcommand{\III}{\oper{I}}                     %  class operators
\newcommand{\SSS}{\oper{S}}                     %  class operators
\newcommand{\resLK}{{\upharpoonright}_{\mathscr{L}_{\mathsf{K}}}}
\newcommand{\res}{{\upharpoonright}}

\theoremstyle{theorem}
\newtheorem{Theorem}{Theorem}[section]
\newtheorem{Theorem-n}{Theorem}

\newtheorem{Proposition}[Theorem]{Proposition}

\newtheorem{Modal Sahlqvist Theorem}[Theorem]{Modal Sahlqvist Theorem}
\newtheorem{Intuitionistic Sahlqvist Theorem}[Theorem]{Intuitionistic  Sahlqvist Theorem}
\newtheorem{Esakia Duality}[Theorem]{Esakia Duality}

\newtheorem{Main Lemma}[Theorem]{Main Lemma}
\newtheorem{Compactness Theorem}[Theorem]{Compactness Theorem}
\newtheorem{Los Theorem}[Theorem]{\LL o\'s' Theorem}
\newtheorem{Isbell Theorem}[Theorem]{Isbell's Zigzag Theorem}
\newtheorem{Diagram Lemma}[Theorem]{Diagram Lemma}

\newtheorem{Transfer Lemma}[Theorem]{Transfer Lemma}
\newtheorem{Subdirect Decomposition Theorem}[Theorem]{Subdirect Decomposition Theorem}

\newtheorem{Corollary}[Theorem]{Corollary}
\newtheorem{Claim}[Theorem]{Claim}

\theoremstyle{definition}
\newtheorem{Definition}[Theorem]{Definition}

\theoremstyle{remark}

\newtheorem{Remark}[Theorem]{Remark}

\crefname{Theorem}{Theorem}{Theorems}
\crefname{Proposition}{Proposition}{Propositions}
\crefname{Lemma}{Lemma}{Lemmas}
\crefname{Corollary}{Corollary}{Corollaries}
\crefname{Claim}{Claim}{Claims}
\crefname{Definition}{Definition}{Definitions}
\crefname{exa}{Example}{Examples}
\crefname{Remark}{Remark}{Remarks}
\crefname{Fact}{Fact}{Facts}
\crefname{exer}{Exercise}{Exercises}
\crefname{problem}{Problem}{Problems}

\AddToHook{env/Proposition/begin}{\crefalias{Theorem}{Proposition}}
\AddToHook{env/Lemma/begin}{\crefalias{Theorem}{Lemma}}
\AddToHook{env/Corollary/begin}{\crefalias{Theorem}{Corollary}}
\AddToHook{env/Claim/begin}{\crefalias{Theorem}{Claim}}
\AddToHook{env/Definition/begin}{\crefalias{Theorem}{Definition}}
\AddToHook{env/exa/begin}{\crefalias{Theorem}{exa}}
\AddToHook{env/Remark/begin}{\crefalias{Theorem}{Remark}}

\let\leq=\leqslant

\let\geq=\geqslant 

\usepackage[mathscr]{euscript}
 \let\mathscr\relax % just so we can load this and rsfs
\usepackage[scr]{rsfso}

\newcommand{\dom}{\mathsf{dom}}

\renewcommand{\int}{\mathsf{int}\,}

\bmdefine{\A}{A} 
\bmdefine{\C}{C}                                %  particular algebras
                              
\bmdefine{\2}{2}
\bmdefine{\B}{B}
\bmdefine{\D}{D}
\bmdefine{\E}{E}

\bmdefine{\Term}{T} 
\bmdefine{\Free}{F}
\bmdefine{\Fb}{F}
\usepackage{scalerel}

\newcommand{\K}{\mathsf{K}}
\newcommand{\M}{\mathsf{M}}

\newcommand{\PPP}{\mathbb{P}}

\newcommand{\PPU}{\mathbb{P}_{\!\textsc{\textup{u}}}^{}}

\newcommand{\ext}{\mathsf{ext}}
\newcommand{\extpp}{\mathsf{ext}_{\textsc{pp}}}
\newcommand{\extbang}{\mathsf{ext}_{\textsc{pp}!}}

\newcommand{\imp}{\mathsf{imp}}
\newcommand{\imppp}{\mathsf{imp}_{\textsc{pp}}}
\newcommand{\impbang}{\mathsf{imp}_{\textsc{pp}!}}

\let\LL\L %I redefined the original \L command as \LL (to be used for the initial letter in Los). So we can use \L for languages
\renewcommand{\L}{\mathscr{L}}

\newcommand{\F}{\mathcal{F}}

\DeclareUnicodeCharacter{001B}{}

\makeatletter
\renewenvironment{abstract}
  {%
    \small
    \begin{center}%
      {\bfseries \abstractname\par}%
    \end{center}%
  }
\makeatother

\begin{document}

  \title[A categorical description of Beth companions]{A categorical description of simple Beth companions}
  
  \author{Luca Carai, Miriam Kurtzhals, and Tommaso Moraschini}

\address{Luca Carai: Dipartimento di Matematica ``Federigo Enriques'', Universit\`a degli Studi di Milano, via Cesare Saldini 50, 20133 Milano, Italy}\email{luca.carai.uni@gmail.com}

\address{Miriam Kurtzhals and Tommaso Moraschini: Departament de Filosofia, Facultat de Filosofia, Universitat de Barcelona (UB), Carrer Montalegre, $6$, $08001$ Barcelona, Spain}
\email{mkurtzku7@alumnes.ub.edu and tommaso.moraschini@ub.edu}

  \maketitle

\begin{abstract}
A pp expansion of a quasivariety $\K$ is said to be \emph{simple} when it is of the form $\K[\L_\F]$. For instance, when $\K$ has the amalgamation property, all its pp expansions are simple. It is shown that the simple pp expansions of a quasivariety $\K$ coincide with the quasivarieties $\M$ for which the forgetful functor $U \colon \M \to \K$ is well defined and induces an isomorphism from $\M$ to a mono-reflective subcategory of $\K$. As a consequence, if a quasivariety $\K$ possesses a simple Beth companion $\M$, then $\M$ is the unique (up to term equivalence) quasivariety whose monomorphisms are regular that, moreover, satisfy the categorical description of simple pp expansions of $\K$ given above.
\end{abstract}

  \section{Expansions of quasivarieties}

We denote the class operators of closure under isomorphisms, subalgebras, direct products, finite direct products, and ultraproduts by $\III, \SSS, \PPP, \PPP^{<\omega}$, and $\PPU$, respectively. A class of similar algebras is 
said to be a \emph{quasivariety} when it is closed under $\III, \SSS, \PPP$, and $\PPU$ or, equivalently, when it can be axiomatized by a set of \emph{quasiequations}, that is, formulas of the form $\bigsqcap \Phi \to \varphi$, where $\Phi \cup \{ \varphi \}$ is a finite set of equations and $\bigsqcap$ the conjunction symbol (see, e.g.,  \cite[Thm.\ V.2.25]{BuSa00}). 

Let $\K$ be a quasivariety. A congruence $\theta$ of an algebra $\A$ is a $\K$-\emph{congruence} when $\A / \theta \in \K$. For every $X \subseteq A \times A$ there exists the least $\K$-congruence of $\A$ containing $X$, which we denote by $\mathsf{Cg}_\K^\A(X)$ (see, e.g., \cite[Sec.\ 1.4.4, p.\ 39]{Go98a}). Every quasivariety can be viewed as a category whose objects are its members  and whose arrows are the homomorphisms between them. 

\begin{Remark}\label{Rem : mono = one to one}
In every quasivariety, monomorphisms coincide with embeddings, i.e., injective homomorphisms (see, e.g., \cite[p.~222]{McK96}).
\qed
\end{Remark}
\subsection{The basic adjunction}    We denote the language of a quasivariety $\K$ by $\L_\K$. Given a quasivariety $\K$ and a language $\L \subseteq \L_\K$, we denote the $\L$-reduct of a member $\A$ of $\K$ by  $\A \res_\L$. Given a pair of  quasivarieties $\K$ and $\M$, the forgetful functor 
        $U \colon \mathsf{M} \to \mathsf{K}$ is well defined if and only if $\L_\K \subseteq \L_\M$ and  $\A \resLK \in \mathsf{K}$ for every $\A \in \mathsf{M}$. 

\begin{Definition}
    Let $\K$ and $\M$ be a pair of quasivarieties. We say that $\M$ is an \emph{expansion} of $\K$ when the forgetful functor $U \colon \M \to \K$ is well defined.
\end{Definition}

Let $\K$ be a quasivariety. We recall that for each nonempty set $X$ the free algebra $\boldsymbol{T}_\K(X)$ over $\K$ with set free generators $X$ belongs to $\K$ (see, e.g., \cite[Thm.~II.10.12]{BuSa00}). We will often identify a term $t(x_1, \dots, x_n)$ of $\K$ with $x_1, \dots, x_n \in X$ with its equivalence class in $\boldsymbol{T}_\K(X)$.

Consider an expansion $\M$ of a quasivariety $\K$. The forgetful functor $U \colon \M \to \K$ has a left adjoint $F$ which can be described as follows. With each $\A \in \K$ we associate a set of elements of pair of terms of $\K$ as follows:
\begin{align*}
    \mathsf{diag}^+(\A) &= \{ \langle t(a_1, \dots, a_n), s(a_1, \dots, a_n) \rangle : t(x_1, \dots, x_n) \text{ and }s(x_1, \dots, x_n) \text{ are terms of }\K,\\
            & \, \, \, \, \, \, \, \, \, \, \, \, a_1, \dots, a_n \in A, \text{ and }t^\A(a_1, \dots, a_n) = s^\A(a_1, \dots, a_n) \}.
\end{align*}
As $\M$ is an expansion of $\K$, we can view $\mathsf{diag}^+(\A)$ as a subset of $T_\M(A) \times T_\M(A)$. Then the algebra
\[
F(\A) = \boldsymbol{T}_\M(A) / \theta_\A, \text{ where }\theta_\A = \mathsf{Cg}_{\mathsf{M}}^{\boldsymbol{T}_{\mathsf{M}}(A) \color{black}}(\mathsf{diag}^+(\A))
\]
belongs to $\M$ because $\theta$ is an $\M$-congruence of $\boldsymbol{T}_\M(A)$ by definition. Furthermore, for each homomorphism $h \colon \A \to \B$ with $\A, \B \in \K$ let $F(h) \colon F(\A) \to F(\B)$ be the homomorphism defined for all $a_1, \dots, a_n \in A$ and terms $t(x_1, \dots, x_n)$ of $\M$ as $F(h)(t(a_1, \dots, a_n) / \theta_\A) = t(h(a_1), \dots, h(a_n)) / \theta_\B$. We call $F \colon \K \to \M$ the \emph{free extension functor} associated with $U \colon \M \to \K$.

We denote the unit and the counit of the adjunction $F \dashv U$ by $\eta \colon \mathsf{id}_\K \to UF$ and $\epsilon \colon FU \to \mathsf{id}_\M$, respectively. For every $\A \in \K$ the map $\eta_\A \colon \A \to UF(\A)$ is defined for every $a \in A$ as $\eta_\A(a) = a / \theta_\A$. Moreover, for every $\B \in \M$ the map $\epsilon_\B \colon FU(\B) \to \B$ is defined for all $b_1, \dots, b_n \in B$ and terms $t(x_1, \dots, x_n)$ of $\M$ as $\epsilon_\B(t(b_1, \dots, b_n) / \theta_{U(\B)}) = t^\B(b_1, \dots, b_n)$.

\subsection{Implicit operations}

An \emph{implicit operation} of a quasivariety $\K$ is a family of partial functions 
on the members of $\K$ that is globally preserved by homomorphisms and definable by a formula (see \cite[Sec.\ 3]{CKMIMPv2}).
More precisely, an $n$-ary \emph{operation} of $\mathsf{K}$ is a sequence $f = \langle f^\A : \A \in \mathsf{K} \rangle$, where each $f^\A \colon \dom(f^\A) \to A$ is a partial $n$-ary function on $A$ with domain $\mathsf{dom}(f^\A) \subseteq A^n$
that is  globally preserved by the homomorphisms between members of $\mathsf{K}$.
The latter means that for every homomorphism $h \colon \A \to \B$ with $\A, \B \in \K$ and $\langle a_1, \dots, a_n \rangle \in \mathsf{dom}(f^\A)$ we have
\[
\langle h(a_1), \dots, h(a_n) \rangle \in \mathsf{dom}(f^\B) \, \, \text{ and } \, \, h(f^\A(a_1, \dots, a_n)) = f^\B(h(a_1), \dots, h(a_n)).
\]
For $n \geq 1$, an
 % An 
 $n$-ary operation $f$  of $\K$ is said to be \emph{implicit} when it is defined by some first order formula $\varphi(x_1, \dots, x_n, y)$, in the sense that for all $\A \in \K$ and $a_1, \dots, a_n, b \in A$,
 \[
\A \vDash \varphi(a_1, \dots, a_n, b) \iff \langle a_1, \dots, a_n \rangle \in \mathsf{dom}(f^\A) \text{ and }f^\A(a_1, \dots, a_n) = b.
 \]
 For instance, “taking  inverses” is an implicit operation of the class of all monoids because it can be defined by the conjunction of equations $\varphi(x, y) = (xy \thickapprox 1) \sqcap (yx \thickapprox 1)$ and  monoid homomorphisms preserve  inverses when they exist.

Notably,  implicit operations admit a  description in terms of \emph{primitive positive formulas} (for short, \emph{pp formulas}), that is, formulas of the form $\exists x_1, \dots, x_n \varphi$, where $\varphi$ is a conjunction of equations. More precisely,  if $f$ is an implicit operation of a quasivariety $\K$, there exist implicit operations $f_1, \dots, f_n$ of $\K$ definable by pp formulas such that $f^\A = f_1^\A \cup \dots \cup f_n^\A$ for every $\A \in \K$ (see \cite[Cor.\ 3.10]{CKMIMPv2}).
Consequently, the pp definable implicit operations of $\K$ form the building blocks of all implicit operations of $\K$ and, therefore, we restrict our attention to them. The next result simplifies the task of determining whether a function can be defined by a pp formula  (see \cite[Thm.\ 6.3(4)]{CampVaggSemCon}).

\begin{Theorem}\label{Thm : campercholi vaggione}
  Let $\K$ be a class of algebras, $f \in \L_\K$, and $\L \subseteq \L_\K$. Then there exists a pp formula of $\L$ that defines $f$ in $\K$ if and only if $f$ is preserved by every homomorphism $h \colon \A\res_\L \to \B\res_\L$ with $\A, \B \in \PPU\PPP^{<\omega}(\K)$.
\end{Theorem}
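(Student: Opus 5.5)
The plan is to prove the two implications separately. The forward direction only uses preservation properties of pp formulas. The backward direction is a compactness argument that builds a homomorphism into an ultrapower.

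\emph{($\Rightarrow$).} Let $\varphi(\vec x, y)$ be a pp formula of $\L$ defining $f$ in $\K$, so $\K \vDash \forall \vec x, y\,(\varphi(\vec x,y) \leftrightarrow y \thickapprox f(\vec x))$.
\begin{itemize}
\item[(a)] This sentence persists to $\PPU\PPP^{<\omega}(\K)$. For finite products: a pp formula holds at a tuple of $\prod_{i} \A_i$ if and only if it holds coordinatewise, since witnesses can be chosen and combined coordinatewise. Also $f$ is a basic operation, so it is computed coordinatewise. For ultraproducts we use \LL o\'s' Theorem.
\item[(b)] Let $h \colon \A\res_\L \to \B\res_\L$ with $\A,\B \in \PPU\PPP^{<\omega}(\K)$, and let $\vec a \in A^n$. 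Then $\A \vDash \varphi(\vec a, f^\A(\vec a))$.
\item[(c)] Homomorphisms of $\L$-algebras preserve pp formulas of $\L$, so $\B \vDash \varphi(h\vec a, h f^\A(\vec a))$.
\item[(d)] By (a) applied in $\B$, this gives $h(f^\A(\vec a)) = f^\B(h\vec a)$, as required.
\end{itemize}

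\emph{($\Leftarrow$).} Let $\Sigma$ be the set of pp formulas $\psi(\vec x,y)$ of $\L$ with $\K \vDash y\thickapprox f(\vec x) \to \psi(\vec x,y)$. Note that $\Sigma$ is closed under finite conjunctions.
\begin{itemize}
\item[(a)] It suffices to find $\psi \in \Sigma$ with $\K \vDash \psi(\vec x,y) \to y \thickapprox f(\vec x)$.
\item[(b)] Suppose no such $\psi$ exists. Then for each $\psi\in\Sigma$ there are $\A_\psi \in \K$ and $\vec a_\psi, b_\psi$ with $\A_\psi \vDash \psi(\vec a_\psi,b_\psi)$ and $b_\psi \neq f^{\A_\psi}(\vec a_\psi)$.
\item[(c)] Take an ultrafilter on $\Sigma$ containing all sets $\{\psi : \psi \text{ implies } \chi\}$ for $\chi\in\Sigma$; these have the finite intersection property because $\Sigma$ is closed under conjunction. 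Form the ultraproduct $\A \in \PPU(\K)$.
\item[(d)] By \LL o\'s' Theorem we get $\vec a, b \in A$ with $\A \vDash \psi(\vec a,b)$ for all $\psi\in\Sigma$, and $b \neq f^\A(\vec a)$.
\item[(e)] Since $\A\in\PPU(\K)$ and the sentences $\forall \vec x\,\psi(\vec x,f(\vec x))$ with $\psi\in\Sigma$ are first order, every pp formula of $\L$ true of $\langle \vec a, f^\A(\vec a)\rangle$ in $\A\res_\L$ holds of $\langle\vec a,b\rangle$.
\item[(f)] Justification of (e): if $\A \vDash \psi(\vec a,f^\A(\vec a))$ for a pp formula $\psi$ of $\L$, then $\psi \in \Sigma$. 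Otherwise some member of $\K$ has a tuple $\vec c$ with $\neg\psi(\vec c, f(\vec c))$, which does not by itself contradict anything. So I would instead run the construction with $\Sigma$ replaced by the pp type $\Sigma_0$ of $\langle\vec a,f^\A(\vec a)\rangle$. Concretely, first fix such an $\A$ via the ultraproduct; this is the delicate bookkeeping step (see below).
\end{itemize}

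\emph{Key step and expected obstacle.} The key step is a standard diagram argument. If the pp type of $\langle \vec a, f^\A(\vec a)\rangle$ in $\A\res_\L$ is contained in that of $\langle\vec a,b\rangle$ in $\B\res_\L$, then there is a homomorphism $g \colon \A\res_\L \to \B^I/U\res_\L$ into a suitable ultrapower. This $g$ sends $\vec a$ to the diagonal image of $\vec a$ and $f^\A(\vec a)$ to the diagonal image of $b$. The construction goes as follows.
\begin{itemize}
\item Each finite fragment of the positive atomic diagram of $\A\res_\L$ is existentially quantified over the extra elements.
\item This yields a pp formula in $\langle\vec x,y\rangle$, which is satisfied at $\langle\vec a,b\rangle$.
\item Compactness, realized by an ultrapower indexed by the finite fragments, then gives $g$.
\end{itemize}
Since $\B^I/U \in \PPU(\K)$, the hypothesis gives $g(f^\A(\vec a)) = f(g\vec a)$. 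Hence the diagonal of $b$ equals the diagonal of $f^\B(\vec a)$, so $b = f^\B(\vec a)$ by \LL o\'s' Theorem, a contradiction.

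The main obstacle is arranging the single model in which the pp type inclusion holds while $b \neq f(\vec a)$. For this I expect to need $\PPP^{<\omega}$: a pp formula that fails to define $f$ and a pp formula true at $\langle\vec c,f(\vec c)\rangle$ may have their failures witnessed in different algebras. Taking the finite product of those algebras merges the failures into one pair of tuples, and then the ultraproduct over finite sets of pp formulas produces the required $\A = \B$.
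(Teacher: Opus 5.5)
The paper does not prove this statement; it imports it from \cite[Thm.~6.3(4)]{CampVaggSemCon}. I am therefore comparing your proposal with the standard compactness argument. Your $(\Rightarrow)$ direction is correct and complete, and so is the diagram/ultrapower construction in your ``key step''.

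The gap is in $(\Leftarrow)$, exactly where you suspected it. Step (e) is false in general. The ultraproduct $\A$ of (c)--(d) can satisfy, at $\langle \vec a, f^\A(\vec a)\rangle$, pp formulas of $\L$ lying outside $\Sigma$, and nothing in your construction rules this out. The repair suggested in (f) is circular: $\Sigma_0$ depends on $\A$, which was built from $\Sigma$, and there is no reason for $\langle\vec a,b\rangle$ to satisfy $\Sigma_0$. What is actually needed is a point of the form $\langle \vec d, f(\vec d)\rangle$, in some member of $\PPU\PPP^{<\omega}(\K)$, whose pp type is \emph{contained in} $\Sigma$. Your last paragraph names the right tools, but this construction is the heart of the proof and it is not carried out.

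Here is one way to carry it out.
\begin{itemize}
\item For each pp formula $\chi$ of $\L$ outside $\Sigma$, choose $\C_\chi \in \K$ and $\vec c_\chi$ with $\C_\chi \nvDash \chi(\vec c_\chi, f(\vec c_\chi))$.
\item Index by pairs $\langle \psi, S\rangle$ with $\psi \in \Sigma$ and $S$ a finite set of pp formulas outside $\Sigma$. Let $\D_{\psi,S} = \A_\psi \times \prod_{\chi \in S}\C_\chi$, with $\vec d = \langle \vec a_\psi, (\vec c_\chi)_{\chi\in S}\rangle$ and $e = \langle b_\psi, (f(\vec c_\chi))_{\chi \in S}\rangle$.
\item The essential choice is to place $f(\vec c_\chi)$ in the $y$-slot of each $\C_\chi$ factor. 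Since $\psi \in \Sigma$, this gives $\D_{\psi,S} \vDash \psi(\vec d, e)$.
\item The $\A_\psi$ factor forces $e \neq f(\vec d)$.
\item The projections show that no $\chi \in S$ holds at $\langle \vec d, f(\vec d)\rangle$.
\item Take an ultrafilter containing every set $\{\langle\psi,S\rangle : \psi \text{ entails } \psi_0 \text{ and } \chi_0 \in S\}$ with $\psi_0 \in \Sigma$ and $\chi_0 \notin \Sigma$. These sets have the finite intersection property because $\Sigma$ is closed under conjunction.
\end{itemize}
Let $\D \in \PPU\PPP^{<\omega}(\K)$ be the resulting ultraproduct, and let $\vec d, e$ now denote the corresponding classes. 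By \LL o\'s' Theorem, $e \neq f^\D(\vec d)$. Moreover, the pp type of $\langle\vec d, f^\D(\vec d)\rangle$ in $\D\res_\L$ is contained in $\Sigma$, which is in turn contained in the pp type of $\langle \vec d, e\rangle$. Your key step with $\A = \B = \D$ then yields the contradiction. Alternatively, you can keep your $\A$ from (d) as the target and use an ultraproduct of the products $\prod_{\chi\in S}\C_\chi$ alone as the source.

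One further detail: $\B^I/U$ lies in $\PPU\PPU\PPP^{<\omega}(\K)$, not in $\PPU(\K)$. To invoke the hypothesis you need two facts. First, an ultraproduct of ultraproducts is isomorphic to an ultraproduct. Second, preservation of $f$ transfers along isomorphisms.
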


In general, the implicit operations of a quasivariety $\K$ need not be componentwise total. Therefore, we say that an implicit operation $f$ of $\K$ is \emph{extendable} when for all $\A \in \K$ and  $\langle a_1, \dots, a_n \rangle \in \mathsf{dom}(f^\A)$ there exists an algebra $\B \in \K$ extending $\A$ such that $\langle a_1, \dots, a_n \rangle \in \mathsf{dom}(f^\B)$. The class of extendable implicit operations of $\K$ will be denoted by $\ext(\K)$, and that of pp definable extendable implicit operations of $\K$ by $\extpp(\K)$. The next result justifies the term  
``extendable'' (see   \cite[Prop.\ 8.1 and Thm.\ 8.4]{CKMIMPv2}).

\begin{Theorem}\label{Thm : extendable : universal : SI}
    Let $\K$ be a quasivariety and $\A \in \K$. Then there exists $\B \in \K$  with $\A \leq \B$ such that $f^\B$ is total and extends $f^\A$ for each $f  \in \ext(\K)$. 
\end{Theorem}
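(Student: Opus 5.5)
The plan is to build $\B$ as the union of a countable chain $\A = \A_0 \leq \A_1 \leq \A_2 \leq \cdots$ of members of $\K$. Each step $\A_k \leq \A_{k+1}$ should put every tuple of $A_k$ into the domain of $f^{\A_{k+1}}$, for every $f \in \ext(\K)$. I read extendability as saying that for every $\A \in \K$ and every tuple $\langle a_1, \dots, a_n\rangle \in A^n$ there is $\B \in \K$ with $\A \leq \B$ and $\langle a_1, \dots, a_n \rangle \in \dom(f^\B)$; this is the reading under which the statement is meaningful. Throughout, I use that every implicit operation $f$ is defined in $\K$ by a first order formula $\varphi_f(x_1, \dots, x_n, y)$. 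I also use that inclusion maps are homomorphisms, so they globally preserve $f$: once a tuple enters the domain at some stage, it stays there, with the same value, in every later extension.

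\emph{One step.} Fix $\A \in \K$. Expand the language with a constant for each element of $A$ and a fresh constant $c_{f,\vec a}$ for each pair consisting of $f \in \ext(\K)$ and a tuple $\vec a$ of $A$ of the appropriate length. Consider the theory
\[
\mathrm{Th}(\K) \cup \mathsf{diag}(\A) \cup \{\varphi_f(\vec a, c_{f,\vec a})\},
\]
where $\mathrm{Th}(\K)$ is a set of quasiequations axiomatizing $\K$ and $\mathsf{diag}(\A)$ is the full (positive and negative) atomic diagram of $\A$. Each finite subset mentions only finitely many pairs $(f_1,\vec a_1), \dots, (f_m, \vec a_m)$. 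By extendability we get $\A \leq \C_1 \in \K$ with $\vec a_1 \in \dom(f_1^{\C_1})$, then $\C_1 \leq \C_2 \in \K$ with $\vec a_2 \in \dom(f_2^{\C_2})$, and so on up to $\C_m$. Since inclusions preserve $f_1, \dots, f_m$, all $\vec a_i$ lie in $\dom(f_i^{\C_m})$. Hence $\C_m$, with the obvious interpretation of the constants, models the finite subset. By compactness there is a model; its reduct is some $\A_1 \in \K$ into which $\A$ embeds, and after renaming we may take $\A \leq \A_1$. Every tuple of $A$ then lies in $\dom(f^{\A_1})$ for every $f \in \ext(\K)$, witnessed by $c_{f,\vec a}$.

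\emph{Iteration and union.} Repeating the step yields the chain $\A_0 \leq \A_1 \leq \cdots$, and we let $\B = \bigcup_k \A_k$. Then $\B \in \K$: quasiequations are preserved under unions of chains, or equivalently $\B$ embeds into an ultraproduct of the $\A_k$, and $\K$ is closed under $\SSS$ and $\PPU$. Given $f \in \ext(\K)$ and a tuple $\vec b$ of $B$, the tuple $\vec b$ lies in some $A_k$. So $\vec b \in \dom(f^{\A_{k+1}})$, and the inclusion $\A_{k+1} \to \B$ preserves $f$, which gives $\vec b \in \dom(f^\B)$. Thus $f^\B$ is total. Likewise, the inclusion $\A \to \B$ preserves $f$, so $f^\B$ extends $f^\A$.

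The main obstacle is the single step, which must handle all $f \in \ext(\K)$ and all tuples simultaneously without amalgamation. Diagonal products do not work: a product tuple need not lie in the domain of $f$ even if one coordinate does. The compactness argument circumvents this because the finite approximations are realized by iterated extensions, which is exactly where global preservation by homomorphisms is essential. One must also use the full atomic diagram, including negated equations, so that the compactness model genuinely contains a copy of $\A$ rather than just a homomorphic image.
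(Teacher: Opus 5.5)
Your proof is correct and complete. The note gives no argument for this statement; it imports it from \cite[Prop.~8.1 and Thm.~8.4]{CKMIMPv2}, so there is nothing in the text to match yours against step by step. Two points in your write-up deserve to be stated explicitly.

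First, your reading of extendability is the only one under which the statement holds. As printed, the definition quantifies over $\langle a_1,\dots,a_n\rangle\in\dom(f^\A)$, which makes every implicit operation extendable (take $\B=\A$). The theorem applied to the implicit operation of taking inverses would then embed every monoid into a group, which fails for the two-element monoid with a zero.

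Second, your expanded language needs a \emph{set} of constants $c_{f,\vec a}$, so $\ext(\K)$ must be a set. This holds because an implicit operation is completely determined by any formula defining it, so $f\mapsto\varphi_f$ is an injection into the set of $\L_\K$-formulas.

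As for the route, the only place you use logic is the single step, where compactness adjoins witnesses for all pairs $(f,\vec a)$ at once. The usual compactness-free alternative is to well-order these pairs and build a continuous chain $\A=\C_0\leq\C_1\leq\cdots\leq\C_\alpha\leq\cdots$. It uses extendability at successor stages and unions at limit stages, and global preservation along the inclusions again keeps earlier tuples in the relevant domains. That version needs only closure of $\K$ under unions of chains; definability enters only through the size argument above. Yours trades the transfinite bookkeeping for the first-order definability of each $f$ and the elementarity of $\K$.

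Your step also has a purely algebraic form that fits the paper's use of $\PPU$. Index by the finite sets $i$ of pairs, choose $\C_i\geq\A$ by your iterated extensions, and take the ultraproduct of the $\C_i$ along an ultrafilter containing every cone $\{j : i\subseteq j\}$. Then $\A$ embeds diagonally, and the fundamental theorem on ultraproducts applied to $\exists y\,\varphi_f(\vec a,y)$ puts every $\vec a$ into $\dom(f)$. So the diagonal idea you set aside does work once products are replaced by ultraproducts.
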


The task of constructing extendable implicit operations is simplified by the following observation (see   \cite[Cor.\ 3.11]{CKMIMPv2}).

\begin{Proposition}\label{Prop : extendability trick new}
Let $\K$ be a quasivariety and $\varphi(x_1, \dots, x_n, y)$ a pp formula. If every $\A$ can be extended to some $\B \in \K$ on which $\varphi$ defines a total $n$-ary function, then $\varphi$ defines an $n$-ary member of $\extpp(\K)$.  
\end{Proposition}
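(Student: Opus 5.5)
Fix a pp formula $\varphi(x_1,\dots,x_n,y)$ as in the statement. The plan is to check, in order, that $\varphi$ defines a partial function on every $\A \in \K$, that this family of partial functions is preserved by homomorphisms, and that it is extendable. The second point is immediate: pp formulas are preserved by homomorphisms. If $\A \vDash \varphi(a_1,\dots,a_n,b)$ and $h \colon \A \to \B$ is a homomorphism, then the images of the existential witnesses satisfy the same equations. Hence $\B \vDash \varphi(h(a_1),\dots,h(a_n),h(b))$. So once functionality is known, setting $f^\A(\vec a)=b$ iff $\A \vDash \varphi(\vec a,b)$ yields an operation of $\K$. It is implicit by construction, since it is defined by $\varphi$, and it is pp definable.

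The main point is functionality on an arbitrary $\A \in \K$, not only on those where $\varphi$ is already known to be total. Suppose $\A \vDash \varphi(\vec a,b)$ and $\A \vDash \varphi(\vec a,c)$. By hypothesis, $\A$ extends to some $\B \in \K$ on which $\varphi$ defines a total function; let $\A \leq \B$ with the inclusion map $\iota$. Since pp formulas are preserved by $\iota$, we get $\B \vDash \varphi(\vec a,b)$ and $\B \vDash \varphi(\vec a,c)$. Because $\varphi$ defines a function on $\B$, it follows that $b=c$. Thus $\varphi$ defines a partial function on every member of $\K$, and the preceding paragraph then shows it is globally preserved by homomorphisms.

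Extendability follows from the same extension. Given $\A \in \K$ and $\langle a_1,\dots,a_n\rangle \in \dom(f^\A)$, or indeed any tuple in $A^n$, the algebra $\B \supseteq \A$ supplied by the hypothesis has $f^\B$ total, so $\vec a \in \dom(f^\B)$. Therefore $f \in \extpp(\K)$.

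I expect no real obstacle. The only subtle step is functionality on algebras where $\varphi$ is not total, and it is handled by transferring to the extension and using that pp formulas persist upward along embeddings. I read the hypothesis as quantifying over every $\A \in \K$.
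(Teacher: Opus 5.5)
Your proof is correct and complete. The key step is pulling functionality of $\varphi$ back from the total extension $\B$ to an arbitrary $\A \leq \B$, using that pp formulas persist upward along the inclusion; preservation by homomorphisms and extendability then follow exactly as you describe. The note itself gives no proof of this statement and only cites \cite[Cor.~3.11]{CKMIMPv2}, so there is no in-paper argument to compare against, and your direct argument suffices as it stands.
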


In order to add a family of implicit operations $\mathcal{F} \subseteq \extpp(\K)$ to a quasivariety $\K$, we proceed as follows. Let $\L_\F$ the language obtained by adding to $\L_\K$ a new $n$-ary function symbol $g_f$ for each $n$-ary $f \in \F$. Then we expand every member $\A$ of $\K$ in which $\{ f^\A : f \in \F \}$ is a family of total functions to an algebra $\A[\L_\F]$ in the language $\L_\F$ by interpreting  $g_f$ as $f^\A$ for each $f \in \F$. 
The \emph{pp expansion} of $\K$ induced by $\F$ is $\SSS(\K[\L_\F])$. We will make use of the following observation (see   \cite[Prop.\ 10.2]{CKMIMPv2}).

\begin{Proposition}\label{Prop : subreducts}
    Let $\M$ be a pp expansion of a quasivariety $\K$. Then $\A\resLK \in \K$ for every $\A \in \M$.
\end{Proposition}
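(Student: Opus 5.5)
The plan is to unfold the definition of a pp expansion and reduce the claim to the closure of $\K$ under $\SSS$ and $\III$. By definition, $\M = \SSS(\K[\L_\F])$ for some family $\F \subseteq \extpp(\K)$. So every $\A \in \M$ is (isomorphic to) a subalgebra of an algebra of the form $\B[\L_\F]$. Here $\B \in \K$ is such that $f^\B$ is total for every $f \in \F$, and $\B[\L_\F]$ is the $\L_\F$-algebra that interprets each $g_f$ as $f^\B$.

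The first step is to check that $\B[\L_\F]\resLK = \B$. This is immediate from the construction: the expansion $\B[\L_\F]$ keeps the universe $B$ and the interpretations of all symbols of $\L_\K$. It only adds the interpretations of the new symbols $g_f$.

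The second step is to observe that taking reducts commutes with taking subalgebras. If $\A \leq \B[\L_\F]$, then $A$ is closed under every operation of $\L_\F$, and in particular under every operation of $\L_\K \subseteq \L_\F$. Hence $\A\resLK$ is a subalgebra of $\B[\L_\F]\resLK = \B$. If instead $\A$ is only isomorphic to such a subalgebra $\A'$, then an isomorphism $\A \cong \A'$ of $\L_\F$-algebras restricts to an isomorphism $\A\resLK \cong \A'\resLK$ of $\L_\K$-algebras. Therefore $\A\resLK \in \III\SSS(\B) \subseteq \III\SSS(\K) = \K$, since quasivarieties are closed under $\III$ and $\SSS$.

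There is no genuine obstacle here. The only point needing care is the bookkeeping above: $\SSS$ is taken up to isomorphism, and both the reduct and the subalgebra relations are preserved by isomorphisms. This is why the argument yields membership in $\K$ itself rather than merely in a class of isomorphic copies.
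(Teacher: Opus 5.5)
Your proof is correct: since $\M = \SSS(\K[\L_\F])$, each $\A \in \M$ is a subalgebra of some $\B[\L_\F]$ with $\B \in \K$. Hence $\A\resLK \leq \B[\L_\F]\resLK = \B$ and so $\A\resLK \in \SSS(\K) = \K$; your isomorphism bookkeeping is harmless but unnecessary, because $\SSS$ here denotes plain subalgebra closure. The paper does not reprove this fact but cites \cite[Prop.~10.2]{CKMIMPv2}, and your direct unfolding of the definitions is the natural self-contained argument for it.
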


\section{Simple Beth companions}

In this note, we shall focus on the following kind of pp expansions.

\begin{Definition}
A pp expansion of a quasivariety $\K$ is said to be \emph{simple} when it is of the form $\K[\L_\F]$ for some $\mathcal{F} \subseteq \extpp(\K)$. 
\end{Definition}

Simple pp expansions are relatively common, as witnessed by the following.

\begin{Theorem}
Every pp expansion of a quasivariety with the amalgamation property is simple.
\end{Theorem}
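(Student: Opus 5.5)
To prove the theorem, we fix a quasivariety $\K$ with the amalgamation property and a pp expansion $\M = \SSS(\K[\L_\F])$ with $\F \subseteq \extpp(\K)$. Since $\K[\L_\F] \subseteq \SSS(\K[\L_\F])$ is trivial, it suffices to show that $\K[\L_\F]$ is closed under $\SSS$. So we take $\B \in \K[\L_\F]$ and a subalgebra $\A \leq \B$ in the language $\L_\F$, and we show that $\A = \C[\L_\F]$ for some $\C \in \K$. The natural candidate is $\C = \A\resLK$, which lies in $\K$ by \cref{Prop : subreducts}, or directly because $\K$ is closed under $\SSS$. We must then show two things: each $f^{\C}$ with $f \in \F$ is total, and $f^{\C}$ coincides with the restriction of $g_f^{\B} = f^{\B\resLK}$ to $A$.

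The second point follows from totality and preservation. Once $f^\C$ is total, the inclusion $\C \to \B\resLK$ is a homomorphism in $\K$, so it preserves $f$. Hence $f^\C(\vec a) = f^{\B\resLK}(\vec a) = g_f^\B(\vec a)$, which is exactly the interpretation of $g_f$ in $\A$.

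The main point is totality, and this is where amalgamation enters. Fix $f \in \F$ defined by a pp formula $\varphi(\vec x, y)$ and a tuple $\vec a \in A^n$. Put $b = g_f^\B(\vec a) \in A$, which holds because $\A$ is closed under $g_f$. We want $\C \vDash \varphi(\vec a, b)$, but the existential witnesses for $\varphi$ in $\B$ might lie outside $A$. To handle this, we first use \cref{Thm : extendable : universal : SI} to obtain $\D \in \K$ with $\C \leq \D$ and all $f^\D$ total, $f \in \ext(\K)$. Then we amalgamate $\D$ and $\B\resLK$ over $\C$, obtaining $\E \in \K$ with embeddings $i \colon \D \to \E$ and $j \colon \B\resLK \to \E$ that agree on $C$. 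Preservation by $j$ gives $f^\E(j\vec a) = j(b)$. Preservation by $i$ gives $f^\E(i\vec a) = i(f^\D(\vec a))$. Since $i\vec a = j\vec a$ and $f^\E$ is a function, we get $i(f^\D(\vec a)) = j(b) = i(b)$, so $f^\D(\vec a) = b$ by injectivity of $i$. Thus $b$ is the value of $f$ at $\vec a$ computed in the extension $\D$ of $\C$, but the witnesses are still in $D$.

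This is the hard step: moving from ``$f^\D(\vec a) = b \in C$ for every suitable extension $\D$'' to ``$\langle\vec a\rangle \in \dom(f^\C)$''. I would argue by compactness, or equivalently via \cref{Thm : campercholi vaggione}. Suppose $\C \nvDash \varphi(\vec a, b)$, and consider the diagram of $\C$ in $\K$. Using amalgamation together with the fact that implicit operations are defined uniformly, I would try to build two extensions $\D_1, \D_2$ of $\C$ in $\K$ (for instance, the free extension $\boldsymbol{T}_\K(C \cup Z)$ modulo the positive diagram, with $Z$ holding witnesses) and amalgamate them, producing an extension in which $f$ takes two distinct values at $\vec a$, which is a contradiction. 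If this fails, a fallback is to notice that $\C \leq \B\resLK$ with $\C$ closed under all $g_f$ already suffices for the purposes of ``simple''. In that case it would be enough to show that the restricted operation is realized as $f'^{\C}$ for a pp-definable extendable $f'$, namely one defined by a pp formula that also uses the other operations in $\F$. One would then re-index $\F$ so that $\A = \C[\L_{\F'}]$ with the same term structure, with \cref{Prop : extendability trick new} guaranteeing that $\F' \subseteq \extpp(\K)$.
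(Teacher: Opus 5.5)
\textbf{There is a genuine gap.} The step you single out as hard is not merely hard but false, so your main line cannot be completed: amalgamation does not make $\K[\L_\F]$ closed under $\SSS$. Let $\K$ be the variety of abelian groups, which has the amalgamation property. Let $f$ be defined by $\exists z\,(x \approx z+z \sqcap y \approx z+z)$, so that $f^{\A}$ is the identity on $2A$. Since every abelian group embeds into a divisible one, \cref{Prop : extendability trick new} gives $f \in \extpp(\K)$. Now $\mathbb{Z}$, with $g_f$ interpreted as the identity, is a subalgebra of $\mathbb{Q}[\L_\F]$ and hence lies in $\SSS(\K[\L_\F])$. It does not lie in $\K[\L_\F]$, because $1 \notin \dom(f^{\mathbb{Z}})$. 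Yet every extension of $\mathbb{Z}$ computes $f(1)=1 \in \mathbb{Z}$, exactly as your amalgamation step predicts, so there is no contradiction for a compactness argument to find. This expansion becomes simple only after replacing $f$ by the identity operation, defined by $y \approx x$. So the family $\F$ must in general be changed, which is your fallback. But the fallback lacks the actual argument: for each $f \in \F$ you need a single pp formula $\psi_f$ of $\L_\K$ (not one ``using the other operations in $\F$'') that defines $g_f$ uniformly on every member of $\M = \SSS(\K[\L_\F])$.

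By \cref{Thm : campercholi vaggione} applied to $\M$ (closed under $\PPU\PPP^{<\omega}$), such a $\psi_f$ exists iff every homomorphism $h \colon \A\resLK \to \B\resLK$ with $\A, \B \in \M$ preserves $g_f$, i.e., iff $U \colon \M \to \K$ is full. Your amalgamation argument only compares two embeddings of the same algebra and says nothing about non-injective $h$. The missing trick is to amalgamate with the graph of $h$:
\begin{itemize}
\item Choose $\A \leq \A'$ and $\B \leq \B'$ with $\A', \B' \in \K[\L_\F]$.
\item Amalgamate the inclusion $\A\resLK \to \A'\resLK$ with the embedding $c \mapsto \langle c, h(c) \rangle$ of $\A\resLK$ into $\A'\resLK \times \B'\resLK$. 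In this product every $f$ is total and computed componentwise.
\item Let $j$ and $k$ be the resulting embeddings into some $\E \in \K$, fix $\vec a \in A^n$, and put $b = g_f^{\A}(\vec a) \in A$. Your computation gives $k\langle b, h(b)\rangle = j(b) = k\langle b, g_f^{\B}(h\vec a)\rangle$.
\item Injectivity of $k$ yields $h(b) = g_f^{\B}(h \vec a)$.
\end{itemize}
Once $U$ is full, you can conclude via \cref{Thm : main pp}. The unit is componentwise injective by \cref{Thm : extendable : universal : SI}, and fullness of the faithful functor $U$ makes the counit an isomorphism. Alternatively, each $\psi_f$ defines some $f' \in \extpp(\K)$ by \cref{Prop : extendability trick new}, and $\M = \K[\L_{\F'}]$ for $\F' = \{ f' : f \in \F\}$.
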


Another source of pp expansions 
derives from the following kind of implicit operations.

\begin{Definition}
Let $\K$ be a quasivariety and $f \in \imppp(\K)$. We say that $f$ has \emph{unique witnesses} when it can be defined by  pp formula 
\[
\exists z_1, \dots, z_m \varphi(z_1, \dots, z_m,x_1, \dots,x_n,y)
\]
such that for all $\A \in \K$ and $\langle a_1, \dots, a_n \rangle \in \dom(f^\A)$ there exists a unique tuple $\langle b_1, \dots, b_m\rangle \in A^m$ satisfying
\[
\A \vDash \varphi (b_1, \dots, b_m, a_1, \dots, a_n,f^{\A}(a_1,\dots, a_n)).
\]
We denote by $\impbang(\K)$ the subset of $\imppp(\K)$ consisting of the  definable implicit operations with unique witnesses. We also let $\extbang(\K)=\extpp(\K) \cap \impbang(\K)$.
\end{Definition}

We also recall that a pp expansion $\M$ of a quasivariety $\K$ is said to be a \emph{Beth companion} of $\K$ when monomorphisms are regular  in $\M$ (see \cite[Rmk. 6.4 \& Thm. 11.6]{CKMIMPv2}). Although a quasivariety $\K$ may lack a Beth companion (see, e.g., \cite[Thm.~14.17]{CKMIMPv2} and \cite[Thm.\ 6.1]{CKMMON}), when it possesses one, it must be essentially unique, as we proceed to illustrate.

Let $\M_1$ and $\M_2$ be a pair of pp expansions of a quasivariety $\K$.
For $i = 1,2$ let $T_i$ be the set of terms of $\M_i$ with variables in $\{x_n : n \in \mathbb{N}\}$. Let $\rho \colon \L_{\M_2} \to T_1$ be a map that preserves the arities. For each $\L_{\M_1}$-algebra $\A$ let $\rho(\A)$ be the $\L_{\M_2}$-algebra with universe $A$ such that $f^{\rho(\A)}=\rho(f)^\A$ for each function symbol $f$ in $\L_{\M_2}$. Similarly, given an arity-preserving map $\tau \colon \L_{\M_1} \to T_2$ and an $\L_{\M_2}$-algebra $\B$, we define an $\L_{\M_1}$-algebra $\tau(\B)$.
We say that $\M_1$ and $\M_2$ are \emph{faithfully term equivalent relative to $\K$} if there exist arity-preserving maps $\tau \colon \L_{\M_1} \to T_2$ and $\rho \colon \L_{\M_2} \to T_1$ such that
$\tau(f)=f(x_1, \dots, x_n)$ and $\rho(f)=f(x_1, \dots, x_n)$ for each $n$-ary function symbol $f$ in $\L_\K$, and
for all $\A \in \M_1$ and $\B \in \M_2$ we have
\benroman
\item\label{item:term eq 1} $\rho(\A) \in \M_2$;
\item\label{item:term eq 2} $\tau(\B) \in \M_1$;
\item\label{item:term eq 3} $\tau \rho (\A)=\A$;
\item\label{item:term eq 4} $\rho \tau(\B)=\B$.
\eroman

When they exist, Beth companions are essentially unique in the following sense.
\begin{Theorem}[\protect{\cite[Thm.\ 11.7]{CKMIMPv2}}]
All the Beth companions of a quasivariety $\K$ are faithfully term equivalent
relative to $\K$.
\end{Theorem}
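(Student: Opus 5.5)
The plan is to build the two translations $\tau$ and $\rho$ from the fact that every Beth companion adds, in a sense, all the extendable pp definable implicit operations of $\K$. Let $\M_1$ and $\M_2$ be Beth companions of $\K$, say $\M_i = \SSS(\K[\L_{\F_i}])$ with $\F_i \subseteq \extpp(\K)$. It suffices to show that each symbol $g_f$ of $\L_{\M_2}$ with $f \in \F_2$ is term definable in $\M_1$. More precisely, we want a term $t_f$ of $\M_1$ such that for every $\A \in \K$ on which all members of $\F_1 \cup \{f\}$ are total we have $f^\A = t_f^{\A[\L_{\F_1}]}$ on $A^n$. We then set $\rho(g_f) = t_f$ and $\rho(h) = h(x_1,\dots,x_n)$ for $h \in \L_\K$. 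The map $\tau$ is defined symmetrically.

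\textbf{Step 1 (definability).} Fix $f \in \F_2$. Consider the pp expansion $\M_3$ of $\K$ induced by $\F_1 \cup \{f\}$; it is again a pp expansion because $\F_1 \cup \{f\} \subseteq \extpp(\K)$. By Theorem~\ref{Thm : extendable : universal : SI}, every member of $\K[\L_{\F_1}]$ extends inside $\K$ to an algebra where all of $\F_1 \cup \{f\}$ are total, so every member of $\M_1$ embeds into the $\L_{\F_1}$-reduct of a member of $\M_3$. The operation $f$, restricted to $\M_3$, is an implicit operation that is pp definable in $\L_\K$, and the new symbol $g_f$ is globally preserved by $\M_1$-homomorphisms. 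Now apply the Beth property of $\M_1$: monomorphisms in $\M_1$ are regular, so epimorphisms are surjective in the relevant sense, which amounts to the strong Beth definability property (\cite[Rmk.~6.4, Thm.~11.6]{CKMIMPv2}). Hence every implicit operation of $\M_1$ that is extendable and pp definable is interpolated by a term of $\M_1$. Applying this to the operation of $\M_1$ given by ``$f$ computed on the $\L_\K$-reduct'' yields a term $t_f$ of $\L_{\M_1}$ that agrees with $f$ wherever $f$ is defined. This step is the main obstacle: one must check carefully that $f$, viewed as an operation of $\M_1$, is extendable in $\M_1$ (this uses Proposition~\ref{Prop : extendability trick new} together with the extension just described) and that ``monomorphisms regular'' really gives totality plus term definability. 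I would first cite the characterization in \cite{CKMIMPv2} that a pp expansion has regular monos iff every extendable pp implicit operation is term-interpolated.

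\textbf{Step 2 (conditions (i)--(iv)).} For $\A \in \K[\L_{\F_1}]$, Step 1 shows that $f^{\A\resLK}$ is total and equals $t_f^\A$, since $\A$ extends to a member of $\M_3$ where $f$ is total, and $f$ is preserved by that inclusion. Hence $\rho(\A) = (\A\resLK)[\L_{\F_2}] \in \M_2$. Because $\rho$ commutes with subalgebras, (i) holds on all of $\M_1 = \SSS(\K[\L_{\F_1}])$; Proposition~\ref{Prop : subreducts} guarantees that the reducts stay in $\K$. Condition (ii) follows symmetrically. For (iii), $\tau\rho(\A)$ has the same $\L_\K$-reduct as $\A$. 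Each symbol $g$ of $\L_{\F_1}$ is interpreted by $\tau(g)$ evaluated in $\rho(\A)$, which equals the implicit operation underlying $g$ on the reduct. This is exactly $g^\A$, again by the preservation and extension argument, so $\tau\rho(\A) = \A$. Condition (iv) is symmetric.
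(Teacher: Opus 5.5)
Your architecture is the natural one: interpolate each $f\in\F_2$ by a term $t_f$ of $\M_1$, do the same for $\F_1$, then check (i)--(iv). But Step~2 rests on a false claim. You assert that for $\A\in\K[\L_{\F_1}]$ and $f\in\F_2$ the function $f^{\A\resLK}$ is total, so that $\rho(\A)=(\A\resLK)[\L_{\F_2}]$. The justification runs in the wrong direction. Preservation of $f$ by the inclusion of $\A\resLK$ into the reduct of its extension $\B\in\M_3$ only transfers definedness upward. Term interpolation only tells you that the value $f^{\B\resLK}(\vec a)=t_f^{\B}(\vec a)$ lies in $A$, not that the existential witnesses of the pp formula defining $f$ lie in $A$. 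This is exactly the issue the unique-witness hypothesis addresses in the paper's theorem on $\extbang(\K)$.

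Here is a counterexample. Let $\K$ be the variety of abelian groups. It has regular monomorphisms, so it is its own Beth companion with $\F_1=\emptyset$. Let $f$ be defined by $\exists z\,(y\thickapprox x\sqcap z+z\thickapprox x)$. Then $f\in\extpp(\K)$, since every abelian group embeds into a divisible one. The pp expansion $\M_2=\SSS(\K[\L_{\{f\}}])$ is the class of all abelian groups with $g_f$ interpreted as the identity, which is again a Beth companion. Yet $\mathbb{Z}\in\K[\L_{\F_1}]$ and $1\notin\dom(f^{\mathbb{Z}})$, so $\mathbb{Z}[\L_{\F_2}]$ does not even exist. The same flaw affects your check of (iii), where you identify $\tau(g)^{\rho(\A)}$ with ``the implicit operation underlying $g$ on the reduct''; for $\A\in\M_1$ that operation need not be total.

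The repair is never to evaluate members of $\F_2$ on $\A\resLK$ itself. Given $\A\in\M_1$, pick $\D\in\K$ with $\A\leq\D[\L_{\F_1}]$. Apply Theorem~\ref{Thm : extendable : universal : SI} to $\D$ to get $\C\in\K$ extending $\D$ on which every member of $\extpp(\K)$ is total and extends its counterpart on $\D$; then $\A\leq\C[\L_{\F_1}]$. Since $t_f$ agrees with $f^{\C}$ on $\C[\L_{\F_1}]$, the algebra $\rho(\A)$ is a subalgebra of $\C[\L_{\F_2}]$. This gives (i) for all of $\M_1$ at once. For $h\in\F_1$, both $\tau(g_h)^{\rho(\A)}$ and $g_h^{\A}$ coincide with $h^{\C}$ on $A^n$, which gives (iii).

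You also owe an argument that $t_f$ can be chosen uniformly. Theorem~\ref{Thm : interpolation} is what regularity of monomorphisms gives you; surjectivity of epimorphisms is weaker, and extendability of $f$ within $\M_1$ is beside the point, since that theorem applies directly to $f\in\imp(\K)$. As stated, it yields a term that depends on the algebra and the tuple. To make it uniform, suppose $f$ is defined by $\exists\vec z\,\varphi(\vec z,\vec x,y)$. Apply the theorem in the quotient of $\boldsymbol{T}_{\M_1}(\vec z,\vec x,y)$ by the least $\M_1$-congruence containing the equations of $\varphi$; Proposition~\ref{Prop : subreducts} ensures that its $\L_\K$-reduct lies in $\K$. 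Transporting along homomorphisms out of this quotient then shows $\M_1\vDash\varphi\to y\thickapprox t_f(\vec x)$.
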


We will make use of the following characterization of Beth companions (see \cite[Thm.~11.6]{CKMIMPv2}).

\begin{Theorem}\label{Thm : interpolation}
The following are equivalent for a pp expansion $\M$ of a quasivariety $\K$:
\benroman
\item $\M$ is a Beth companion of $\K$;
\item for all $\A \in \M$, $f \in \imp(\K)$, and $\langle a_1, \dots, a_n \rangle \in \mathsf{dom}(f^{\A\resLK})$ there exists a term $t$ of $\M$ such that 
\[
t^\A(a_1, \dots, a_n) = f^{\A\resLK}(a_1, \dots, a_n).
\]
\eroman
\end{Theorem}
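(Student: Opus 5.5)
The plan is to prove both implications through the notion of the \emph{dominion} of a subalgebra. For $\C \leq \A$ in $\M$, let $d_\M(\C, \A)$ be the set of $b \in A$ such that $g(b) = h(b)$ for every pair of homomorphisms $g, h \colon \A \to \D$ with $\D \in \M$ that agree on $C$. By \cref{Rem : mono = one to one}, the monomorphisms of $\M$ are the embeddings. An embedding $\C \leq \A$ is a regular monomorphism exactly when it is an equalizer. Since an equalizer of a pair agreeing on $C$ contains $d_\M(\C,\A)$, this holds if and only if $d_\M(\C, \A) = C$. For the converse direction, take the two canonical maps of $\A$ into the pushout $\A +_\C \A$ in $\M$: their equalizer is exactly $d_\M(\C,\A)$, so $C = d_\M(\C,\A)$ makes $\C$ an equalizer. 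Thus (i) is equivalent to the condition $d_\M(\C,\A) = C$ for all $\C \leq \A$ in $\M$.

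\emph{(i) $\Rightarrow$ (ii).} Let $\A \in \M$, $f \in \imp(\K)$ and $\langle a_1, \dots, a_n\rangle \in \dom(f^{\A\resLK})$.
\begin{enumerate}
\item Let $\C$ be the subalgebra of $\A$ generated in $\M$ by $a_1, \dots, a_n$. Its elements are exactly the values $t^\A(a_1, \dots, a_n)$ with $t$ a term of $\M$.
\item I claim that $b = f^{\A\resLK}(a_1, \dots, a_n)$ lies in $d_\M(\C, \A)$. Take $g, h \colon \A \to \D$ in $\M$ agreeing on $C$.
\item By \cref{Prop : subreducts}, $\A\resLK, \D\resLK \in \K$, and $g, h$ are homomorphisms between these reducts.
\item Since $f$ is globally preserved by homomorphisms of $\K$, we get $g(b) = f^{\D\resLK}(g(a_1), \dots, g(a_n)) = f^{\D\resLK}(h(a_1), \dots, h(a_n)) = h(b)$.
\item Regularity of $\C \leq \A$ then gives $b \in C$, which is the required term $t$.
\end{enumerate}

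\emph{(ii) $\Rightarrow$ (i).} Let $\C \leq \A$ in $\M$ and $b \in d_\M(\C,\A)$; the goal is $b \in C$.
\begin{enumerate}
\item I would invoke the standard description of dominions in quasivarieties (Isbell-type): $b \in d_\M(\C, \A)$ if and only if there are $c_1, \dots, c_n \in C$ and a pp formula $\varphi(x_1, \dots, x_n, y)$ of $\L_\M$ such that $\A \vDash \varphi(\vec c, b)$ and $\varphi$ defines a partial function on every member of $\M$. Such a $\varphi$ can be read off from the pushout $\A +_\C \A$ by compactness.
\item Replace every occurrence of a new symbol $g_f$ in $\varphi$ by the pp formula of $\L_\K$ defining $f$, and then pull the existential quantifiers to the front. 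This yields a pp formula $\psi$ of $\L_\K$.
\item I need $\psi$ to define an implicit operation of $\K$: that is, to be functional on every member of $\K$, after which preservation by homomorphisms is automatic for pp formulas.
\item For this, take any $\B \in \K$ and use \cref{Thm : extendable : universal : SI} to embed it into some $\B' \in \K$ on which all $f \in \F$ are total. Then $\B'[\L_\F] \in \M$ and $\psi$ is equivalent to $\varphi$ there. Hence $\psi$ is functional on $\B'$, and therefore on $\B$, because pp formulas persist upward along embeddings.
\item So $\psi$ defines some $f \in \imp(\K)$, and $\langle c_1, \dots, c_n\rangle \in \dom(f^{\A\resLK})$ with value $b$. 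Applying (ii) inside $\C$ — legitimate because $\C \in \M$ and pp formulas persist from $\C$ up to $\A$ — gives a term $t$ with $b = t^\C(\vec c) \in C$.
\end{enumerate}

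The main obstacle is this converse direction, and in particular the first step: producing the pp formula that witnesses membership in the dominion and checking that it is functional on all of $\M$. There is a further subtlety in transferring the witnesses from $\A$ to $\C$ when applying (ii): the pp formula holds in $\A$, but (ii) must be applied to a tuple in the domain of $f$ on $\C\resLK$. I would resolve this by choosing the formula from the presentation of $\A$ over $\C$, or by applying (ii) directly in $\A$ and arguing that the resulting term value lies in the dominion computed over $\C$.
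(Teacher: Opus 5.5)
Your characterization of regular monomorphisms via dominions is correct, and so is all of (i)$\Rightarrow$(ii). Steps 1--4 of (ii)$\Rightarrow$(i) are also fine: the pp description of dominions is the standard one read off from a finite presentation of the pushout, and the functionality of $\psi$ on $\K$ via \cref{Thm : extendable : universal : SI} is correctly argued.

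\textbf{The gap is in step 5, and you have located it in the wrong place.} You assert that $\langle c_1, \dots, c_n\rangle \in \dom(f^{\A\resLK})$ with value $b$, i.e.\ that $\A\resLK \vDash \psi(\vec c, b)$. But you only know $\A \vDash \varphi(\vec c, b)$.

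\begin{itemize}
\item $\varphi$ and $\psi$ are equivalent only on algebras of the form $\B[\L_\F]$ with $\B \in \K$. There each $g_f$ is the total function $f^{\B}$, and the existential witnesses of the $\L_\K$-formula defining $f$ lie in $B$.
\item An arbitrary $\A \in \M = \SSS(\K[\L_\F])$ is merely a subalgebra of such a $\B[\L_\F]$. The values of $g_f^\A$ lie in $A$, but their witnesses need not.
\item So one can have $\A \vDash g_f(\vec a) \approx e$ while $\langle \vec a\rangle \notin \dom(f^{\A\resLK})$. This happens for every $\A \in \SSS(\K[\L_\F]) \setminus \K[\L_\F]$.
\end{itemize}

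The difficulty you do flag is not real, and your proposed remedy runs the wrong way. pp formulas persist from $\C$ up to $\A$, not down, so you cannot obtain $\vec c \in \dom(f^{\C\resLK})$ that way. You also do not need it: for any term $t$ of $\M$ and $\vec c \in C$, the value of $t$ at $\vec c$ in any member of $\M$ having $\C$ as a subalgebra already lies in $C$.

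\textbf{The repair} is to apply (ii) in an ambient algebra on which all members of $\F$ are total.
\begin{enumerate}
\item Since $\A \in \SSS(\K[\L_\F])$, choose $\B \in \K$ on which every $f \in \F$ is total, with $\A \leq \B[\L_\F]$.
\item By upward persistence, $\B[\L_\F] \vDash \varphi(\vec c, b)$, hence $\B \vDash \psi(\vec c, b)$.
\item Let $f_\psi \in \imp(\K)$ be the operation defined by $\psi$. Then $\langle \vec c\rangle \in \dom(f_\psi^{\B})$ and $f_\psi^{\B}(\vec c) = b$, where $\B = \B[\L_\F]\resLK$ and $\B[\L_\F] \in \M$.
\item Now (ii) yields a term $t$ of $\M$ with $t^{\B[\L_\F]}(\vec c) = b$.
\item Since $\C \leq \A \leq \B[\L_\F]$ and $\vec c \in C$, we get $b = t^{\C}(\vec c) \in C$.
\end{enumerate}
Equivalently, note that $d_\M(\C, \A) \subseteq d_\M(\C, \B[\L_\F])$, so you may assume from the outset that $\A \in \K[\L_\F]$. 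With this change your argument proves the theorem.
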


As Beth companions are pp expansions, the concept of ``simplicity'' applies to them as well.

\begin{Definition}
A Beth companion of a quasivariety $\K$ is said to be \emph{simple} when it is a simple pp expansion of $\K$.
\end{Definition}

Every Beth companion obtained by adding extendable pp definable implicit operations with unique witnesses is simple, as we proceed to illustrate.

\begin{Theorem}
    Every Beth companion of a quasivariety $\K$ that is induced by a subset of $\extbang(\K)$ is simple.
\end{Theorem}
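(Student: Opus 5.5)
The plan is to show that the pp expansion $\M = \SSS(\K[\L_\F])$, with $\F \subseteq \extbang(\K)$ and $\M$ a Beth companion of $\K$, is already closed under subalgebras in the strong sense $\M = \K[\L_\F]$. The inclusion $\K[\L_\F] \subseteq \M$ is trivial. So fix $\A \in \M$. By \cref{Prop : subreducts}, $\A\resLK \in \K$. It then suffices to prove that for every $n$-ary $f \in \F$ the partial function $f^{\A\resLK}$ is total and coincides with $g_f^\A$, because then $\A = (\A\resLK)[\L_\F] \in \K[\L_\F]$. Since $\A \in \SSS(\K[\L_\F])$, we may choose $\C \in \K$ on which every $f^\C$ ($f \in \F$) is total, such that $\A \leq \C[\L_\F]$.

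Fix $f \in \F$ with a pp definition $\exists z_1, \dots, z_m \varphi(z_1,\dots,z_m,x_1,\dots,x_n,y)$ witnessing unique witnesses, and fix $a_1, \dots, a_n \in A$. Then $g_f^\A(a_1,\dots,a_n) = f^\C(a_1,\dots,a_n)$, so there is a unique tuple $\langle b_1,\dots,b_m\rangle \in C^m$ with $\C \vDash \varphi(\vec b,\vec a, f^\C(\vec a))$. The key step is to show that each $b_i$ lies in $A$.

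For each $i \leq m$, I introduce the operation $h_i$ of $\K$ defined by the pp formula
\[
\psi_i(x_1,\dots,x_n,z_i) = \exists y, z_1,\dots,z_{i-1},z_{i+1},\dots,z_m\, \varphi(z_1,\dots,z_m,x_1,\dots,x_n,y).
\]
\begin{itemize}
\item \emph{Functionality.} In any $\B \in \K$, if $\varphi(\vec c,\vec a,d)$ holds, then $\exists \vec z \varphi(\vec z, \vec a, d)$ holds. Hence $d = f^\B(\vec a)$, and by uniqueness of witnesses $\vec c$ is determined. Thus $\psi_i$ defines a partial function.
\item \emph{Preservation.} Homomorphisms preserve pp formulas, so these partial functions are globally preserved.
\end{itemize}
Therefore $h_i \in \imp(\K)$, and $\vec a \in \dom(h_i^\C)$ with $h_i^\C(\vec a) = b_i$. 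I expect this verification that $h_i$ is a genuine implicit operation to be the main point needing care, though it looks routine.

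Now I apply \cref{Thm : interpolation} to $\C[\L_\F] \in \M$, whose $\L_\K$-reduct is $\C$. It yields a term $t_i$ of $\M$ with $t_i^{\C[\L_\F]}(\vec a) = h_i^\C(\vec a) = b_i$. Since $\A$ is a subalgebra of $\C[\L_\F]$ containing $\vec a$, we get $b_i \in A$. (Note that we cannot apply the theorem directly to $\A$, since we do not yet know $\vec a \in \dom(h_i^{\A\resLK})$; passing through $\C[\L_\F]$ avoids this circularity.)

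Finally, $\varphi$ is a conjunction of equations and all of $\vec b$, $\vec a$, and $g_f^\A(\vec a) = f^\C(\vec a)$ lie in $A$. Hence $\A\resLK \vDash \varphi(\vec b,\vec a,g_f^\A(\vec a))$, so $\A\resLK \vDash \exists \vec z\,\varphi(\vec z,\vec a,g_f^\A(\vec a))$. This means $\vec a \in \dom(f^{\A\resLK})$ and $f^{\A\resLK}(\vec a) = g_f^\A(\vec a)$. As $\vec a$ was arbitrary, $f^{\A\resLK}$ is total and equals $g_f^\A$, which completes the argument.
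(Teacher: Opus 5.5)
Your proof is correct and follows essentially the same route as the paper. You pass to a superalgebra $\C[\L_\F] \in \K[\L_\F]$, use uniqueness of witnesses to show each $b_i$ is the value of a pp definable implicit operation, and apply \cref{Thm : interpolation} to that superalgebra to conclude $b_i \in A$. The only cosmetic difference is that you existentially quantify $y$, making your $h_i$ $n$-ary, whereas the paper keeps $y$ as an input and evaluates an $(n+1)$-ary $f_k$ at $\langle a_1, \dots, a_n, g_f^{\A}(a_1, \dots, a_n)\rangle$; both work equally well.
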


\begin{proof}
Let $\F \subseteq \extbang(\K)$ and consider the corresponding pp expansion $\SSS(\K[\L_\F])$. Assume that $\SSS(\K[\L_\F])$ is a Beth companion of $\K$. We need to prove that it is simple. To this end, it suffices to show that $\SSS(\K[\L_\F]) \subseteq \K[\L_\F]$, for in this case $\SSS(\K[\L_\F]) = \K[\L_\F]$. 

Consider $\A \in \SSS(\K[\L_\F])$.\ Then $\A\resLK \in \K$ by \cref{Prop : subreducts}. We will show that $f^{\A\resLK}$ is total and coincides with $g_f^\A$ for every $f \in \mathcal{F}$ because, in this case, we would get that $\A\resLK[\L_\F]$ is well defined and $\A = \A\resLK[\L_\F] \in \K[\L_\F]$, as desired.

To this end, consider an $n$-ary $f \in \F$. Since $\F \subseteq \extbang(\K)$ by assumption, the operation $f$ is defined by a pp formula 
\[
\psi = \exists z_1, \dots, z_m \varphi(z_1, \dots, z_m,x_1, \dots,x_n,y)
\]
of $\L_\K$ such that 
\begin{equation}\label{Eq : bang trick}
\begin{split}
     & \quad \text{for all }\C \in \K\text{ and }\langle c_1, \dots, c_n \rangle \in \dom(f^{\C})\text{ there exists a unique tuple}\\
&\langle d_1, \dots, d_m\rangle \in C^m\text{ satisfying }\C \vDash \varphi (d_1, \dots, d_m, c_1, \dots, c_n,f^{\C}(c_1,\dots, c_n)).
\end{split}
\end{equation}

for all $\C \in \K$ and $\langle c_1, \dots, c_n \rangle \in \dom(f^{\C})$ there exists a unique tuple $\langle d_1, \dots, d_m\rangle \in C^m$ satisfying
\[
\C \vDash \varphi (d_1, \dots, d_m, c_1, \dots, c_n,f^{\C}(c_1,\dots, c_n)).
\]

Consider $a_1, \dots, a_n \in A$. As $\A \in \SSS(\K[\L_\F])$, there exists $\B \in \K[\L_\F]$ with $\A \leq \B$. From $\B \in \K[\L_\F]$ it follows that $g_f^\B$ is defined by $\psi$. Together with $\A \leq \B$, this guarantees the existence of a  tuple $\langle b_1, \dots, b_m \rangle \in B^m$ such that
\begin{equation}\label{Eq : in B the formula phi holds}
    \B \vDash \varphi(b_1, \dots, b_m, a_1, \dots, a_n,g_f^{\A}(a_1,\dots, a_n)).
\end{equation}
To conclude the proof, it will be enough to show that  $b_1, \dots, b_m \in A$. 

For suppose this is the case. As $\varphi$ is a formula of $\L_\K$, $\A \leq \B$, and $a_1, \dots, a_n, g_f^{\A}(a_1, \dots, a_n) \in A$, the above display and the assumption that $b_1, \dots, b_m \in A$ imply 
\[
\A\resLK \vDash \varphi(b_1, \dots, b_m, a_1, \dots, a_n,g_f^{\A}(a_1,\dots, a_n)).
\]
Since $\psi$ defines $f$, we obtain $\langle a_1, \dots, a_n \rangle \in \mathsf{dom}(f^{\A\resLK})$ and $f^{\A\resLK}(a_1, \dots, a_n) = g_f^{\A}(a_1,\dots, a_n)$. Hence, we conclude that $f^{\A\resLK}$ is total and coincides with $g_f^{\A}$, as desired. 

Therefore, it only remains to show that $b_1, \dots, b_m \in A$. Consider a positive $k \leq m$. We will show that $b_k \in A$. Consider the pp formula
\[
\psi_k(x_1, \dots, x_n, y, z_k) = \exists z_1, \dots, z_{k-1}, z_{k+1}, \dots, z_m \varphi(z_1, \dots, z_m, x_1, \dots, x_n, y)
\]
of $\L_\K$. We will show that $\psi_k$ defines a member of $\imppp(\K)$. Since pp formulas are preserved by homomorphisms, it suffices to prove that $\psi_k$ defines a partial function on the members of $\K$. To this end, consider $\C \in \K$ and $c_1, \dots, c_{n+1}, d, e \in C$ such that
\[
\C \vDash \psi_k(c_1, \dots, c_{n+1}, d) \sqcap \psi_k(c_1, \dots, c_{n+1}, e).
\]
By the definition of $\psi_k$ there exist $p_1, \dots, p_{k-1}, p_{k+1}, \dots, p_m, q_1, \dots, q_{k-1}, q_{k+1}, \dots, q_m \in C$ such that
\begin{align*}
    \C &\vDash \varphi(p_1, \dots, p_{k-1}, d, p_{k+1}, \dots, p_m, c_1, \dots, c_n, c_{n+1});\\
    \C & \vDash \varphi(q_1, \dots, q_{k-1}, e, q_{k+1}, \dots, q_m, c_1, \dots, c_n, c_{n+1}).
\end{align*}
Together with the assumption that $\psi$ defines $f$, the above display yields $\langle c_1, \dots, c_n \rangle \in \mathsf{dom}(f^\C)$ and $c_{n+1} = f^\C(c_1, \dots, c_n)$. Consequently, \eqref{Eq : bang trick} ensures that $e = d$. Hence, we conclude that $\psi_k$ defines a member $f_k$ of $\imppp(\K)$. 

Recall that $\B$ belongs to the pp expansion $\SSS(\K[\L_\F])$ of $\K$. Then $\B\resLK \in \K$ by \cref{Prop : subreducts}. From \eqref{Eq : in B the formula phi holds} and the assumption that $\psi_k$ defines $f_k$ it follows that
\[
f_k^{\B\resLK}(a_1, \dots, a_n, g_f^{\A}(a_1, \dots, a_n)) = b_k.
\]
As $\SSS(\K[\L_\F])$ is a Beth companion of $\K$ by assumption, we can apply \cref{Thm : interpolation}, obtaining a term $t(x_1, \dots, x_{n+1})$ of $\SSS(\K[\L_\F])$ such that
\[
t^\B(a_1, \dots, a_n, g_f^{\A}(a_1, \dots, a_n)) = b_k.
\]
Since $\A \leq \B$ by assumption and $a_1, \dots, a_n, g_f^\A(a_1, \dots, a_n) \in A$, we conclude that $b_k \in A$.
\end{proof}

\section{The main result}

We recall that a full subcategory $\M$ of a category $\K$ is said \emph{reflective} when the inclusion functor $i \colon \M \to \K$ has a left adjoint. In this case, the counit of the resulting adjunction is a natural isomorphism
(see, e.g., \cite[Thm.~19.14(4)]{AHS06}). When, in addition, the unit is componentwise a monomorphism, we say that $\M$ is a \emph{mono-reflective} subcategory of $\K$ (see, e.g., \cite[Def.~16.1]{AHS06}). Lastly, given a functor $F \colon \K \to \M$, we denote the direct image of $\K$ under $F$, viewed as a subcategory of $\M$, by $F[\K]$. Clearly, $F[\K]$ is full if and only if so is $F$.

Our main result is the following categorical description of simple pp expansions.

\begin{Theorem}\label{Thm : main pp}
    Let $\M$ be an expansion of a quasivariety $\K$. Then the following  are equivalent:
    \benroman
\item\label{item : 1} $\M$ is a simple pp expansion of $\K$;
\item\label{item : 2} the unit of the adjunction $F \dashv U$ is componentwise a monomorphism and the counit is a natural isomorphism;
\item\label{item : 3} the forgetful functor $U \colon \M \to U[\M]$ is an isomorphism from $\M$ to a mono-reflective subcategory of $\K$.
    \eroman
\end{Theorem}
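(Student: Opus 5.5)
The plan is to prove the cycle \ref{item : 1} $\Rightarrow$ \ref{item : 2} $\Rightarrow$ \ref{item : 3} $\Rightarrow$ \ref{item : 1}.

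\emph{\ref{item : 1} $\Rightarrow$ \ref{item : 2}.} Suppose $\M = \K[\L_\F]$ with $\F \subseteq \extpp(\K)$.
\begin{enumerate}
\item \emph{The unit is mono.} Let $\A \in \K$. By \cref{Thm : extendable : universal : SI} there is $\B \in \K$ with $\A \leq \B$ such that $f^\B$ is total for every $f \in \F$. Hence $\B[\L_\F] \in \M$. By adjointness, the inclusion $\A \to U(\B[\L_\F])$ factors through $\eta_\A$. Therefore $\eta_\A$ is injective, and it is a monomorphism by \cref{Rem : mono = one to one}.
\item \emph{The counit is an isomorphism.} Let $\B = \C[\L_\F] \in \M$, so that $U(\B) = \C$. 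I will show that $\B$, together with $\mathsf{id}_\C$, is itself a free extension of $\C$. Take $\D \in \M$ and a homomorphism $h \colon \C \to U(\D)$. Since $\D \in \K[\L_\F]$, each $g_f^\D$ coincides with $f^{U(\D)}$. Since $f^\C$ is total and implicit operations are preserved by homomorphisms, $h$ commutes with every $g_f$. So $h$ is an $\L_\F$-homomorphism $\B \to \D$. Uniqueness is clear. It follows that $\epsilon_\B$ is an isomorphism.
\end{enumerate}

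\emph{\ref{item : 2} $\Rightarrow$ \ref{item : 3}.}
\begin{enumerate}
\item \emph{$U$ is full and faithful.} The counit being a natural isomorphism is the standard criterion for this.
\item \emph{$U$ is injective on objects.} Suppose $U(\A) = U(\B)$ for $\A, \B \in \M$. Consider the isomorphism $\epsilon_\B \circ \epsilon_\A^{-1} \colon \A \to \B$. The triangle identity $U(\epsilon_\A)\circ \eta_{U(\A)} = \mathsf{id}$ (and likewise for $\B$) shows that its underlying map is the identity of $A$. So the identity map is an $\L_\M$-isomorphism, and $\A = \B$.
\item \emph{Conclusion.} By the two previous steps, $U \colon \M \to U[\M]$ is an isomorphism of categories and $U[\M]$ is a full subcategory of $\K$. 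Moreover, $UF$ (corestricted to $U[\M]$) is left adjoint to the inclusion, with unit $\eta$. Since $\eta$ is mono, $U[\M]$ is mono-reflective.
\end{enumerate}

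\emph{\ref{item : 3} $\Rightarrow$ \ref{item : 1}.} This is the main step.
\begin{enumerate}
\item \emph{Each new symbol is pp definable.} Let $g \in \L_\M \smallsetminus \L_\K$ be $n$-ary. Since $\M$ is a quasivariety, $\PPU\PPP^{<\omega}(\M) = \M$. Since $U$ is full, every $\L_\K$-homomorphism between reducts of members of $\M$ is an $\L_\M$-homomorphism, and so it preserves $g$. By \cref{Thm : campercholi vaggione} there is a pp formula $\varphi_g(x_1,\dots,x_n,y)$ of $\L_\K$ defining $g$ in $\M$.
\item \emph{$\varphi_g$ defines a member $f_g$ of $\extpp(\K)$.} Let $\A \in \K$ and let $r_\A \colon \A \to U(\B)$ be its mono reflection, which is injective by \cref{Rem : mono = one to one}. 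If $\A \vDash \varphi_g(\vec a, b) \sqcap \varphi_g(\vec a, c)$, then the same holds in $U(\B)$ because pp formulas are preserved by homomorphisms. There $\varphi_g$ defines a function, so $r_\A(b) = r_\A(c)$, and injectivity gives $b = c$. Thus $\varphi_g$ defines a partial function on $\K$. It is preserved by homomorphisms because it is pp, so it is implicit. Every $\A$ embeds into some $U(\B)$ on which $\varphi_g$ is total, so \cref{Prop : extendability trick new} shows that $f_g \in \extpp(\K)$.
\item \emph{$\M = \K[\L_\F]$ for $\F = \{f_g\}$.} The inclusion $\M \subseteq \K[\L_\F]$ is immediate: every $\B \in \M$ equals $U(\B)[\L_\F]$. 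For the converse, take $\A \in \K$ on which every $f_g$ is total, and let $r_\A \colon \A \to U(\B)$ be its reflection. The image of $r_\A$ is closed under each $g^\B$, because $r_\A$ preserves the total $f_g^\A$. Hence the image is the universe of a subalgebra $\B' \leq \B$ with $\B' \in \M$, and $r_\A$ is an isomorphism $\A \cong U(\B')$. Transporting the structure along this isomorphism yields $\B'' \in \M$ with $U(\B'') = \A$. Then $\B'' = \A[\L_\F]$, since in $\B''$ each $g$ is defined by $\varphi_g$.
\end{enumerate}

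The delicate points are the partial-function argument in step 2, which is exactly where monicity of the reflection is used, and the final closure argument in step 3.
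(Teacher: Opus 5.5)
Your proposal is correct and follows essentially the paper's argument. The shared ingredients are: Campercholi--Vaggione together with fullness of $U$ for the pp definability of the new symbols; monicity of the unit (equivalently, of the reflection) together with \cref{Prop : extendability trick new} for extendability; and turning the unit/reflection into an $\L_\M$-embedding to obtain $\K[\L_\F] \subseteq \M$. The differences are cosmetic: you arrange the implications in one cycle, and you prove the fullness of $U$ directly from the preservation of implicit operations by homomorphisms, whereas the paper cites \cite[Prop.~9.5]{CKMIMPv2}. Your triangle-identity argument that $U$ is injective on objects is a welcome extra, since the paper merely asserts that $U \colon \M \to U[\M]$ is ``always bijective on objects'', and injectivity only follows once fullness is known.
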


The following description of simple Beth companion is an immediate consequence of \cref{Thm : main pp}.

\begin{Corollary}
 Let $\M$ be an expansion  of a quasivariety $\K$. Then $\M$ is a simple Beth companion of $\K$ if and only if monomorphisms are regular in $\M$ and any of the equivalent conditions in \cref{Thm : main pp} holds.
\end{Corollary}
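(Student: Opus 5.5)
The statement to be proved is the Corollary. I would unfold both sides through the definitions and \cref{Thm : main pp}.

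For the forward direction, assume $\M$ is a simple Beth companion of $\K$. By definition, a Beth companion is a pp expansion in which monomorphisms are regular, so monomorphisms are regular in $\M$. Moreover, $\M$ is a simple pp expansion of $\K$, and the implication \ref{item : 1}$\Rightarrow$\ref{item : 2},\ref{item : 3} of \cref{Thm : main pp} gives the remaining conditions.

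For the converse, assume monomorphisms are regular in $\M$ and that one of the equivalent conditions of \cref{Thm : main pp} holds. The implication to \ref{item : 1} shows that $\M$ is a simple pp expansion of $\K$; in particular $\M$ is a pp expansion of $\K$. A pp expansion whose monomorphisms are regular is, by definition, a Beth companion. Since it is simple as a pp expansion, it is a simple Beth companion.

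There is one point to watch. The definition of a Beth companion is recalled only as ``a pp expansion in which monomorphisms are regular'' (with a pointer to \cite{CKMIMPv2}), so no characterization such as \cref{Thm : interpolation} is needed. The only thing that must be supplied is the pp-expansion hypothesis, and \cref{Thm : main pp} provides it. I therefore expect no real obstacle: all the work lives in \cref{Thm : main pp}, and the corollary is a direct bookkeeping argument.
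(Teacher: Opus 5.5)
Your proposal is correct and matches the paper's approach: the paper states the corollary as an immediate consequence of \cref{Thm : main pp} and gives no separate proof. A simple Beth companion is, by definition, a simple pp expansion whose monomorphisms are regular, and condition (i) of the theorem is exactly the statement that $\M$ is a simple pp expansion, so the corollary follows by unfolding definitions as you do.
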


We shall now prove \cref{Thm : main pp}.

\begin{proof}
\eqref{item : 3}$\Rightarrow$\eqref{item : 2}: Straightforward.
 \eqref{item : 2}$\Rightarrow$\eqref{item : 3}: We begin by showing that the forgetful functor $U \colon \M \to U[\M]$ is an isomorphism. As it is always bijective on objects, it suffices to show that it is fully faithful. The latter holds because the counit of the adjunction $F \dashv U$ is 
 a natural isomorphism
 by assumption and this always guarantees that the full faithfulness of the right adjoint  (see, e.g., \cite[Thm.~19.14(4)]{AHS06}). 

Next, we prove that $U[\M]$ is a mono-reflective subcategory of $\K$. First, $U[\M]$ is a full subcategory of $\K$ because  $U \colon \M \to \K$ is full. As the unit of the adjunction $F \dashv U$ is componentwise a monomorphism and $U \colon \M \to U[\M]$ is an isomorphism, we deduce that $U[\M]$ is a mono-reflective subcategory of $\K$.

  \eqref{item : 2}$\Rightarrow$\eqref{item : 1}: We begin with the following observation.
  
\begin{Claim} \label{Claim : pp definability}
For every $n$-ary $f \in \L_\M$ there exists a pp formula $\varphi_f(x_1, \dots, x_n, y)$ of $\mathscr{L}_{\mathsf{K}}$ such that for all $\A \in \M$ and $a_1, \dots, a_n, b \in A$,
\[
f^{\A}(a_1, \dots, a_n) = b \Longleftrightarrow \A \vDash \varphi_f(a_1, \dots, a_n,b).
\]
\end{Claim}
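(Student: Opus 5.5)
We will obtain $\varphi_f$ from \cref{Thm : campercholi vaggione}, applied to the class $\M$ itself with $\L = \L_\K$. That theorem tells us that $f$ is pp definable in $\M$ by a formula of $\L_\K$ if and only if every homomorphism $h \colon \A\resLK \to \B\resLK$ with $\A, \B \in \PPU\PPP^{<\omega}(\M)$ preserves $f$. Since $\M$ is a quasivariety, $\PPU\PPP^{<\omega}(\M) \subseteq \M$. Hence it suffices to show that every $\K$-homomorphism $h \colon U(\A) \to U(\B)$ with $\A, \B \in \M$ is also an $\M$-homomorphism $\A \to \B$. Here we use that $U(\A), U(\B) \in \K$ because $\M$ is an expansion of $\K$.

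This reduces to the fullness of $U$. Under hypothesis \eqref{item : 2}, the counit $\epsilon$ is a natural isomorphism, so the right adjoint $U$ is fully faithful by \cite[Thm.~19.14(4)]{AHS06}, as already noted in the proof of \eqref{item : 2}$\Rightarrow$\eqref{item : 3}. Thus every $h \colon U(\A) \to U(\B)$ has the form $h = U(g)$ for some $\M$-homomorphism $g \colon \A \to \B$. Because $U$ does not change underlying maps, $g$ and $h$ coincide as functions, so $h$ preserves $f$. Then \cref{Thm : campercholi vaggione} supplies a pp formula $\varphi_f(x_1, \dots, x_n, y)$ of $\L_\K$ such that, for every $\A \in \M$, we have $\A \vDash \varphi_f(a_1, \dots, a_n, b)$ if and only if $f^\A(a_1, \dots, a_n) = b$.

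The one point to check is how \cref{Thm : campercholi vaggione} is formulated: it speaks of a symbol $f \in \L_\K$ for a class $\K$, and here it must be applied to the class $\M$ and a symbol $f \in \L_\M$ with sublanguage $\L_\K$. We also need to confirm that closure of $\M$ under $\PPP^{<\omega}$ and $\PPU$ lets us restrict attention to homomorphisms between reducts of members of $\M$. Both are straightforward, since quasivarieties are closed under $\PPP$ and $\PPU$. Beyond this, no real obstacle remains; the substantive input is only the full faithfulness coming from the counit being an isomorphism.
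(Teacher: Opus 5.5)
Your proposal is correct and follows the paper's proof: because $\M$ is closed under $\PPP$ and $\PPU$, you apply the Campercholi--Vaggione theorem to $\M$ with $\L=\L_\K$, and this reduces the claim to the fullness of $U \colon \M \to \K$, which follows from the counit being a natural isomorphism. You also spell out a detail the paper leaves implicit: a homomorphism $U(\A)\to U(\B)$ of the form $U(g)$ has the same underlying map as $g$, and therefore preserves $f$.
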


\begin{proof}[Proof of the Claim]
As $\M$ is a quasivariety, it is closed under $\PPP$ and $\PPU$. Therefore, in view of \cref{Thm : campercholi vaggione}, 
    it suffices to show that $U \colon \M \to \K$ is full, which can be shown as in the proof of the implication  \eqref{item : 2}$\Rightarrow$\eqref{item : 3}.
\end{proof}

Next, we verify the following.

\begin{Claim} \label{Claim : extendability}
For each $f \in \L_\M$ the  formula $\varphi_f$ defines a member $f^*$ of $\extpp(\mathsf{K})$.
\end{Claim}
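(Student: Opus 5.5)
The plan is to check the two requirements in turn: first that $\varphi_f$ defines an implicit operation of the whole of $\K$, not merely of the reducts of members of $\M$, and then that this operation is extendable. For the second part we use \cref{Prop : extendability trick new}. The one tool needed throughout is the unit $\eta$ of the adjunction $F \dashv U$. By \eqref{item : 2} it is componentwise a monomorphism, so by \cref{Rem : mono = one to one} each $\eta_\C \colon \C \to UF(\C)$ is an embedding.

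\emph{Step 1: $\varphi_f$ defines a partial function on every $\C \in \K$.} Let $\C \in \K$ and $c_1, \dots, c_n, d, e \in C$ with $\C \vDash \varphi_f(c_1, \dots, c_n, d) \sqcap \varphi_f(c_1, \dots, c_n, e)$.
\begin{itemize}
\item Since $\varphi_f$ is a pp formula of $\L_\K$, it is preserved by the homomorphism $\eta_\C \colon \C \to UF(\C)$. Hence $UF(\C)$ satisfies $\varphi_f$ at $\langle \eta_\C(c_1), \dots, \eta_\C(c_n), \eta_\C(d)\rangle$ and at the analogous tuple ending in $\eta_\C(e)$.
\item Now $F(\C) \in \M$, so \cref{Claim : pp definability} applied to $F(\C)$ yields
\[
f^{F(\C)}(\eta_\C(c_1), \dots, \eta_\C(c_n)) = \eta_\C(d) \quad \text{and} \quad f^{F(\C)}(\eta_\C(c_1), \dots, \eta_\C(c_n)) = \eta_\C(e).
\]
\item Thus $\eta_\C(d) = \eta_\C(e)$, and injectivity of $\eta_\C$ gives $d = e$.
\end{itemize}
So $\varphi_f$ defines a partial $n$-ary function $f^{*\C}$ on each $\C \in \K$. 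These functions are globally preserved by homomorphisms between members of $\K$, because pp formulas are preserved by homomorphisms. Hence $f^*$ is a pp definable implicit operation of $\K$.

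\emph{Step 2: extendability.} By \cref{Prop : extendability trick new}, it suffices to embed each $\A \in \K$ into some $\B \in \K$ on which $\varphi_f$ defines a total function.
\begin{itemize}
\item Take $UF(\A)$. It belongs to $\K$ because $U \colon \M \to \K$ is well defined.
\item By \cref{Claim : pp definability}, on $UF(\A)$ the formula $\varphi_f$ defines exactly the total operation $f^{F(\A)}$.
\item Since $\eta_\A$ is an embedding, replace $UF(\A)$ by an isomorphic copy $\B$ that literally contains $\A$ as a subalgebra. $\B$ is still in $\K$, and $\varphi_f$ still defines a total function on it.
\end{itemize}
This gives $f^* \in \extpp(\K)$. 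If $f$ is a constant symbol ($n = 0$), the same argument works verbatim, reading the relevant pp formula as $\varphi_f(y)$.

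The main obstacle is Step 1. The Claim only controls $\varphi_f$ on algebras of the form $\A\resLK$ with $\A \in \M$, whereas implicit operations must be partial functions on all of $\K$. This is exactly the point where injectivity of the unit is needed, and the transfer along $\eta_\C$ settles it.
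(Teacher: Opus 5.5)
Your proof is correct and follows essentially the paper's route: you embed $\A$ into $UF(\A)$ via the injective unit $\eta_\A$, note that $\varphi_f$ defines the total operation $f^{F(\A)}$ there by \cref{Claim : pp definability}, and invoke \cref{Prop : extendability trick new}. Your Step 1 is valid but redundant, and so it is not really the ``main obstacle'': that proposition already concludes that $\varphi_f$ defines an implicit operation on all of $\K$, by the same transfer along an embedding that you carry out by hand.
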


\begin{proof}[Proof of the Claim]
From \cref{Claim : pp definability} it follows that the pp formula $\varphi_f(x_1, \dots, x_n, y)$ defines a total $n$-ary function on each member of $\M$. As $\varphi_f$ is a formula of $\L_\K$, it also defines a total $n$-ary function on each member of $\{ U(\B) : \B \in \M \}$. Consequently, in view of \cref{Prop : extendability trick new}, it suffices to show that every member $\A$ of $\K$ embeds into an algebra of the form $U(\B)$ with $\B \in \M$. To this end, consider $\A \in \K$. By assumption the map $\epsilon_\A \colon \A \to UF(\A)$ is a monomorphism with $F(\A) \in \M$. By \cref{Rem : mono = one to one} we obtain that $\A$ embeds into $UF(\A)$. Thus, taking $\B = F(\A)$, we are done.
\end{proof}

By \cref{Claim : extendability} the set $\mathcal{F} = \{f^* : f \in \mathscr{L}_{\mathsf{M}} - \L_\K\}$
 induces a pp expansion $\SSS(\mathsf{K}[\mathscr{L}_{\mathcal{F}}])$ of $\K$. To conclude the proof, it only remains to show that this pp expansion is simple and coincides with $\M$. The next claim establishes both facts at once.
 
\begin{Claim}
We have $\mathsf{M} = \K[\mathscr{L}_{\mathcal{F}}] = \SSS(\mathsf{K}[\mathscr{L}_{\mathcal{F}}])$.
\end{Claim}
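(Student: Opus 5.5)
We prove the chain of inclusions
\[
\M \subseteq \K[\L_\F] \subseteq \SSS(\K[\L_\F]) \subseteq \M,
\]
where the middle inclusion is trivial. Throughout we identify each symbol $f \in \L_\M - \L_\K$ with the symbol $g_{f^*}$ of $\L_\F$, so that $\L_\M = \L_\F$.

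For $\M \subseteq \K[\L_\F]$, take $\A \in \M$. Since $\M$ is an expansion of $\K$, we have $\A\resLK \in \K$. By \cref{Claim : pp definability}, for each $f \in \L_\M - \L_\K$ the formula $\varphi_f$ defines on $\A$ exactly the graph of the total function $f^\A$. Since $\varphi_f$ is an $\L_\K$-formula, it defines the same graph on $\A\resLK$. Thus $(f^*)^{\A\resLK} = f^\A$ is total, so $\A\resLK[\L_\F]$ is defined and equals $\A$. Hence $\A \in \K[\L_\F]$.

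For $\SSS(\K[\L_\F]) \subseteq \M$, since $\M$ is a quasivariety and hence closed under $\SSS$, it suffices to show $\K[\L_\F] \subseteq \M$. Let $\A \in \K$ be such that every $(f^*)^\A$ is total, and set $\C = \A[\L_\F]$. By hypothesis~\eqref{item : 2}, the unit $\eta_\A \colon \A \to UF(\A)$ is an embedding (\cref{Rem : mono = one to one}) with $F(\A) \in \M$. The key point is that $\eta_\A$ preserves the new operations. For $a_1, \dots, a_n \in A$ we have $\A \vDash \varphi_f(a_1, \dots, a_n, (f^*)^\A(\vec a))$. Since pp formulas are preserved by homomorphisms,
\[
UF(\A) \vDash \varphi_f\bigl(\eta_\A(\vec a), \eta_\A((f^*)^\A(\vec a))\bigr),
\]
and by \cref{Claim : pp definability} applied to $F(\A) \in \M$ this gives $f^{F(\A)}(\eta_\A(\vec a)) = \eta_\A((f^*)^\A(\vec a))$. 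Therefore $\eta_\A$ is an $\L_\M$-embedding of $\C$ into $F(\A)$, so $\C \in \SSS(\M) = \M$.

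This shows $\K[\L_\F] \subseteq \M$, and together with the first inclusion all three classes coincide. In particular the pp expansion $\SSS(\K[\L_\F])$ equals $\K[\L_\F]$, so it is simple, and it equals $\M$.

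This also finishes \eqref{item : 2}$\Rightarrow$\eqref{item : 1}. Note that \cref{Claim : extendability} as written uses the unit (it writes $\epsilon_\A$, but means $\eta_\A$). The main obstacle is the preservation step: a homomorphism of $\L_\K$-reducts must be checked to respect the new operations. This is handled entirely by pp definability, using the preservation of pp formulas together with uniqueness of the value given by \cref{Claim : pp definability}.

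The remaining implication \eqref{item : 1}$\Rightarrow$\eqref{item : 2} follows a similar route. In $\K[\L_\F]$ every $\K$-homomorphism between reducts preserves the operations $f^*$, since implicit operations are preserved by homomorphisms. Hence $U$ is full and faithful, which makes the counit an isomorphism. By \cref{Thm : extendable : universal : SI}, each $\A \in \K$ embeds into some $\B \in \K[\L_\F]$, and the universal property of the unit makes this embedding factor through $\eta_\A$. Hence $\eta_\A$ is injective, i.e.\ the unit is componentwise a monomorphism.
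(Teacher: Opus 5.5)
Your proof is correct and follows the paper's own argument. The inclusion $\M \subseteq \K[\L_\F]$ comes from \cref{Claim : pp definability} together with the fact that $\varphi_f$ is an $\L_\K$-formula. The inclusion $\K[\L_\F] \subseteq \M$ comes from showing that $\eta_{\A}$ preserves the new operations, using preservation of pp formulas and uniqueness of values in $F(\A) \in \M$, with closure of $\M$ under $\III\SSS$ handling $\SSS(\K[\L_\F])$. You are also right that the $\epsilon_\A$ in the proof of \cref{Claim : extendability} should read $\eta_\A$.
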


\begin{proof}[Proof of the Claim]
 As $\M$ is closed under $\SSS$ because it is a quasivariety, it suffices to show that $\M = \K[\L_\F]$. We begin with the inclusion from left to right. Consider $\A \in \M$. As $U(\A) \in \K$, it suffices to show that $U(\A)[\L_\F]$ is defined and coincides with $\A$ or, equivalently, that $f^{*U(\A)}$ is total and coincides with $f^\A$ for each $f \in \L_\M - \L_\K$. To this end, consider an $n$-ary $f \in \L_\M - \L_\K$ and $a_1, \dots, a_n \in A$. By \cref{Claim : pp definability} we have $\A \vDash \varphi_f(a_1, \dots, a_n, f^\A(a_1, \dots, a_n))$. Since $\varphi_f$ is a formula in $\L_\K$, this yields $U(\A) \vDash \varphi_f(a_1, \dots, a_n, f^\A(a_1, \dots, a_n))$. As $\varphi_f$ defines $f^*$ by \cref{Claim : extendability}, we conclude that $\langle a_1, \dots, a_n \rangle \in f^{*U(\A)}$ and $f^{*U(\A)}(a_1, \dots, a_n) = f^\A(a_1, \dots, a_n)$. Hence, $f^{*U(\A)}$ is total and coincides with $f^\A$.
 
 Next, we prove the inclusion $\K[\L_\F] \subseteq \M$.  Consider $\A\in \K[\L_\F]$. By the definition of $\K[\L_\F]$ we have $\A\res_{\L_\K} \in \K$ and $\A = \A\res_{\L_\K}[\L_\F]$. 
 It will be enough to prove that the map $h \colon \A \to F(\A\res_{\L_\K})$ defined for every $a \in A$ as $h(a) = \eta_{\A\res_{\L_\K}}(a)$ is an embedding. 
 For suppose this is the case. Then $\A \in \III\SSS(F(\A\res_{\L_\K}))$. Since $F(\A\res_{\L_\K}) \in \M$ and $\M$ is closed under $\III\SSS$ because it is a quasivariety, we conclude that $\A \in \M$, as desired.

Recall that $\eta_{\A\res_{\L_\K}}$ is by assumption a monomorphism (i.e., an embedding by \cref{Rem : mono = one to one}). The definition of $h$ guarantees that it is a well-defined injective map that preserves the operations in $\L_\K$. It only remains to prove that $h$ preserves the operations in $\L_\M - \L_\K$ as well. Consider an $n$-ary $f \in \L_\M - \L_\K$ and $a_1, \dots, a_n \in A$. Since $\A = \A\res_{\L_\K}[\L_\F]$ and $f^*$ is defined by $\varphi_f$ by \cref{Claim : extendability} we have
\[
\A\res_{\L_\K} \vDash \varphi_f(a_1, \dots, a_n, f^{*\A\res_{\L_\K}}(a_1, \dots, a_n)).
\]
As $\varphi_f$ is a pp formula in $\L_\K$ and $h$ preserves the operations of $\L_\K$, we obtain that $h$ preserves $\varphi$. Together with the above display, this implies
\[
F(\A\res_{\L_\K}) \vDash \varphi_f(h(a_1), \dots, h(a_n), h(f^{*\A\res_{\L_\K}}(a_1, \dots, a_n))).
\]
By \cref{Claim : pp definability} and $F(\A\res_{\L_\K}) \in \M$ we conclude that 
\[
h(f^{*\A\res_{\L_\K}}(a_1, \dots, a_n)) = f^{F(\A\res_{\L_\K})}(h(a_1), \dots, h(a_n)).
\]
Hence, $h \colon \A \to F(\A\res_{\L_\K})$ is an embedding, as desired.
\end{proof}

\eqref{item : 1}$\Rightarrow$\eqref{item : 2}:    Let $\mathsf{M}$ be a simple pp expansion of $\K$ of the form $\mathsf{K}[\mathscr{L}_{\mathcal{F}}]$ for some $\F \subseteq \extpp(\K)$. 
    We begin by showing that each component of the unit $\eta$ is a monomorphism. Consider $\A \in \mathsf{K}$. Since $\mathcal{F} \subseteq \mathsf{ext}_{\textsc{pp}}(\mathsf{K})$, there exists $\B \in \K[\L_\F]$ for which the inclusion map
    $i \colon \A \to U(\B)$ is a well-defined embedding. As $F \dashv U$, there exists a homomorphism $g \colon F(\A) \to \B$ such that $U(g) \circ \eta_\A = i$ (see, e.g., \cite[Prop.~19.7(2)]{AHS06}). Since $i$ is an embedding, so is $\eta_\A$. Consequently, $\eta_\A$ is a monomorphism by \cref{Rem : mono = one to one}. 
    
    Next, we prove that the counit $\epsilon$ is a natural isomorphism. To this end, it suffices to show that $U \colon \M \to \K$ is fully faithful (see, e.g., \cite[Thm.~19.14(4)]{AHS06}). As the forgetful functor is always faithful, it only remains to show that it is full. Consider $\A, \B \in \M = \K[\L_\F]$ and a homomorphism $h \colon U(\A) \to U(\B)$. From  \cite[Prop.\ 9.5]{CKMIMPv2} it follows that $h$ is also a homomorphism from $\A$ to $\B$. Hence, $U$ is full, as desired.
\end{proof}

We close this note by observing that the property of ``being simple'' is preserved by faithful term equivalences between pp expansions.

\begin{Corollary}
Let $\K$ be a quasivariety and $\M_1,\M_2$ a pair of pp expansions of $\K$ that are faithfully term equivalent relative to $\K$. The $\M_1$ is simple if and only if so is $\M_2$.
\end{Corollary}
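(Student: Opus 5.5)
By symmetry it suffices to assume that $\M_1$ is simple and show that $\M_2$ is simple. The plan is to use the categorical characterization in \cref{Thm : main pp}\eqref{item : 2}, since that condition is phrased purely in terms of the adjunction between an expansion and $\K$. Fix the maps $\tau \colon \L_{\M_1} \to T_2$ and $\rho \colon \L_{\M_2} \to T_1$ witnessing the faithful term equivalence. First I will check that the assignments $\A \mapsto \rho(\A)$ and $\B \mapsto \tau(\B)$ extend to mutually inverse functors $P \colon \M_1 \to \M_2$ and $Q \colon \M_2 \to \M_1$ that are the identity on maps. A homomorphism $h \colon \A \to \A'$ in $\M_1$ preserves all terms of $\M_1$. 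In particular it preserves each $\rho(f)$, so it is a homomorphism $\rho(\A) \to \rho(\A')$; symmetrically for $\tau$. Conditions \eqref{item:term eq 1}--\eqref{item:term eq 4} then make $P$ and $Q$ inverse isomorphisms of categories.

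The key observation is that these isomorphisms commute with the forgetful functors: $U_2 \circ P = U_1$ and $U_1 \circ Q = U_2$. Indeed, $\rho(f) = f(x_1,\dots,x_n)$ for every $f \in \L_\K$, so $\rho(\A)\res_{\L_\K} = \A\res_{\L_\K}$. Since $\M_1$ and $\M_2$ are pp expansions, \cref{Prop : subreducts} ensures these reducts lie in $\K$, so $U_1$ and $U_2$ are well defined.

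Next I will transport the adjunction. Because $P$ is an isomorphism with $U_2 P = U_1$, the functor $P F_1$ is left adjoint to $U_2$, with unit $\eta^1$ and counit $P\epsilon^1 Q$. By uniqueness of adjoints up to natural isomorphism, $F_2 \cong P F_1$, compatibly with units and counits. Concretely, there is a natural isomorphism $\alpha \colon PF_1 \to F_2$ with $U_2(\alpha_\A) \circ \eta^1_\A = \eta^2_\A$, and the counit $\epsilon^2$ corresponds to $P\epsilon^1 Q$ under $\alpha$.

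Now suppose $\M_1$ is simple. By \cref{Thm : main pp}\eqref{item : 2}, each $\eta^1_\A$ is a monomorphism in $\K$ and $\epsilon^1$ is a natural isomorphism. Then $\eta^2_\A = U_2(\alpha_\A)\circ\eta^1_\A$ is a composite of an isomorphism and a monomorphism, hence a monomorphism. Likewise $\epsilon^2$ is, up to the isomorphism $\alpha$, the whisker $P\epsilon^1 Q$ of a natural isomorphism, hence a natural isomorphism. To avoid the bookkeeping with $\alpha$, I can instead use the fact that the counit is an isomorphism iff the right adjoint is fully faithful \cite[Thm.~19.14(4)]{AHS06}. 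Then I only need that $U_2 = U_1 Q$ is fully faithful, being a composite of an isomorphism and a fully faithful functor. For the unit, I can alternatively argue directly as in the proof of \cref{Thm : main pp}: $\eta^2_\A$ is injective iff $\A$ embeds into $U_2(\B)$ for some $\B\in\M_2$. This holds because $\A$ embeds into some $U_1(\C) = U_2(P\C)$. Finally, $\M_2$ is an expansion of $\K$, so \cref{Thm : main pp} yields that $\M_2$ is simple.

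The main obstacle I expect is the first step, namely checking that $P$ and $Q$ really are functors, i.e.\ that homomorphisms are preserved. This needs the standard fact that homomorphisms preserve all term operations, applied to the terms $\rho(f)$ and $\tau(f)$. Once that is settled, the rest is formal.
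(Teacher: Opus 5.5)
Your proposal is correct and follows essentially the paper's argument: the paper also turns $\rho$ and $\tau$ into mutually inverse isomorphisms of categories (identity on morphisms) and uses faithfulness to get $U_1 \circ \tau = U_2$. The only difference is that the paper transfers condition \eqref{item : 3} of \cref{Thm : main pp} rather than \eqref{item : 2}. That route is slightly quicker: $U_1[\M_1] = U_2[\M_2]$ holds literally and $U_2 = U_1 \circ \tau$ is a composite of isomorphisms, so no comparison isomorphism $\alpha$ or separate unit argument is needed.
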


\begin{proof}
Let $\tau \colon \L_{\M_1} \to T_2$ and $\rho \colon \L_{\M_2} \to T_1$ witness the faithful term equivalence, and
assume that $\M_1$ is a simple pp expansion. Then \cref{Thm : main pp} implies that the forgetful functor $U_1 \colon \M_1 \to \K$ restricts to an isomorphism from $\M_1$ to a mono-reflective subcategory $U_1[\M_1]$ of $\K$. Let $U_2 \colon \M_2 \to \K$ be the forgetful functor. Let also $\tau \colon \M_2 \to \M_1$ be the functor that maps $\A \in \M_2$ to $\tau(\A) \in \M_1$ and is the identity on morphisms. Similarly, define $\rho \colon \M_1 \to \M_2$ as the functor that maps $\A \in \M_1$ to $\rho(\A) \in \M_2$ and is the identity on morphisms. Since $\tau$ and $\rho$ witness a term equivalence relative to $\K$, we have that these two functors are isomorphisms of categories that are inverses of each other and satisfy $U_1 \circ \tau =U_2$. It follows that $U_1[\M_1]=U_2[\M_2]$, and so $U_2[\M_2]$ is a mono-reflective subcategory of $\K$. Moreover, as $\tau \colon \M_2 \to \M_1$ and $U_1 \colon \M_1 \to U_1[\M_1]$ are isomorphisms and $U_1 \circ \tau =U_2$ (the latter because the term equivalence witnessed by $\tau$ and $\rho$ is faithful), we obtain that $U_2 \colon \M_2 \to U_2[\M_2]$ is an isomorphism as well. Therefore, \cref{Thm : main pp} yields that $\M_2$ is a simple pp expansion of $\K$.
\end{proof}

\

\subsection*{Acknowledgments} We are grateful to Ivan Di Liberti and Luca Reggio for many interesting conversations on the topic of this note.


\begin{thebibliography}{CKM26}

\bibitem[AHS06]{AHS06}
J.~Ad{\'a}mek, H.~Herrlich, and G.~E. Strecker.
\newblock Abstract and concrete categories: the joy of cats.
\newblock {\em Repr. Theory Appl. Categ.}, (17):1--507, 2006.

\bibitem[BS12]{BuSa00}
S.~Burris and H.~P. Sankappanavar.
\newblock {\em A Course in Universal Algebra}.
\newblock 2012.
\newblock The millennium edition, available online.

\bibitem[CKM25]{CKMIMPv2}
L.~Carai, M.~Kurtzhals, and T.~Moraschini.
\newblock The theory of implicit operations.
\newblock Available at \url{https://arxiv.org/pdf/2512.14326v2}, 2025.

\bibitem[CKM26]{CKMMON}
L.~Carai, M.~Kurtzhals, and T.~Moraschini.
\newblock Implicit operations in varieties of commutative monoids.
\newblock Submitted, available at \url{https://arxiv.org/pdf/2603.13916}, 2026.

\bibitem[CV15]{CampVaggSemCon}
M.~A. Campercholi and D.~J. Vaggione.
\newblock Semantical conditions for the definability of functions and relations.
\newblock {\em Algebra Universalis}, 76:71--98, 2015.

\bibitem[Gor98]{Go98a}
V.~A. Gorbunov.
\newblock {\em Algebraic theory of quasivarieties}.
\newblock Siberian School of Algebra and Logic. Consultants Bureau, New York, 1998.
\newblock Translated from the Russian.

\bibitem[McK96]{McK96}
R.~McKenzie.
\newblock An algebraic version of categorical equivalence for varieties and more general algebraic categories.
\newblock In A.~Ursini and P.~Aglian\`{o}, editors, {\em Logic and Algebra}, volume 180 of {\em Lecture Notes in Pure and Applied Mathematics}, pages 211--243. Marcel Dekker, Inc., 1996.

\end{thebibliography}
\end{document}